\documentclass[11pt]{article}

\usepackage[letterpaper,margin=1in]{geometry}
\usepackage{amsmath}
\usepackage{amssymb}
\usepackage{mathtools}
\usepackage{amsthm}
\usepackage{graphicx}
\usepackage{booktabs}
\usepackage{multirow}
\usepackage{array}
\usepackage{algorithm}
\usepackage{algpseudocode}
\usepackage{url}
\usepackage{pgfplots}
\usepackage{microtype}
\usepackage[hidelinks]{hyperref}
\pgfplotsset{compat=1.18}
\providecommand{\doi}[1]{DOI~\href{https://doi.org/#1}{\nolinkurl{#1}}}

\DeclareMathOperator{\conv}{conv}
\DeclareMathOperator{\dist}{dist}
\DeclareMathOperator{\argmin}{argmin}
\DeclareMathOperator{\argmax}{argmax}
\newcolumntype{L}[1]{>{\raggedright\arraybackslash}p{#1}}
\newcommand{\R}{\mathbb{R}}
\newcommand{\ip}[2]{\left\langle #1,#2\right\rangle}
\newcommand{\norm}[1]{\left\lVert #1\right\rVert}
\newcommand{\eps}{\varepsilon}
\newcommand{\cA}{\mathcal{A}}
\newcommand{\cI}{\mathcal{I}}

\newcommand{\cV}{\mathcal{V}}
\newcommand{\one}{\mathbf{1}}

\theoremstyle{plain}
\newtheorem{theorem}{Theorem}[section]
\newtheorem{lemma}[theorem]{Lemma}
\newtheorem{proposition}[theorem]{Proposition}
\newtheorem{corollary}[theorem]{Corollary}
\newtheorem{assumption}[theorem]{Assumption}

\theoremstyle{definition}

\newtheorem{example}[theorem]{Example}

\theoremstyle{remark}
\newtheorem{remark}[theorem]{Remark}

\renewenvironment{proof}[1][Proof]{\par\noindent\textit{#1.}\ }{\par}

\begin{document}

\title{RA-DCA: A Randomized Active-Set DCA for Directional Stationarity in Max-Structured DC Programs}

\author{Yi-Shuai Niu\\
Beijing Institute of Mathematical Sciences and Applications (BIMSA), Beijing, China\\
\href{mailto:niuyishuai@bimsa.cn}{niuyishuai@bimsa.cn}\\
\href{https://orcid.org/0000-0002-9993-3681}{ORCID: 0000-0002-9993-3681}}

\date{}

\maketitle

\begin{abstract}
We study nonsmooth difference-of-convex programs whose subtracted convex term
is a finite maximum of smooth convex functions.  In this setting, standard DCA
iterations may converge to critical points that are not directionally
stationary, whereas exact active-vertex screening can be expensive when active
sets are large or combinatorial.  We propose RA-DCA, a vertex-first randomized
active-set DCA that projects active gradients onto sampled directions, checks a
sampled vertex residual, and uses a small linear program only as a
low-residual convex-combination fallback.  The method preserves the descent
structure of DCA and reduces the randomized screening layer to matrix
multiplications.  Under the stated regularity,
numerical active-set consistency, and random-embedding assumptions, every
accumulation point generated by the safeguarded method is directionally
stationary with probability one.  MATLAB experiments first test
the theorem on degenerate max-affine, max-quadratic, and sparse support-function
models, where the safeguard avoids nonstationary critical points and closely
tracks a full active-vertex scan.  Block top-k tests then show that the same
screening idea remains useful when exact aggregate enumeration is
combinatorial.  Trimmed-regression, complementarity, and QUBO diagnostics
separate cases where active-set selection helps from cases dominated by
multistart search, the DC split, or other problem-specific features.
\end{abstract}

\noindent\textbf{Keywords:} RA-DCA, max-structured DC programming, randomized active-set methods, directional stationarity, nonsmooth nonconvex optimization

\medskip

\noindent\textbf{MSC2020 subject classifications.} 90C26, 90C30, 65K05, 49J52, 68W20, 90C11, 90C33

\bigskip

\section{Introduction}

\subsection{Motivation and contributions}

Difference-of-convex (DC) programming is a standard modelling framework for
nonconvex optimization \cite{horst1999dc,lethi2005dc,lethi2018dc}.  In its
simplest unconstrained form one~minimizes
\begin{equation}\label{eq:problem}
        F(x) := g(x)-h(x), \qquad x\in\R^n,
\end{equation}
where both $g$ and $h$ are convex.  The DCA of Pham-Le Thi
\cite{pham1997dc,lethi2018dc} linearizes $h$ at the current iterate and
solves one convex subproblem.  This gives a robust and inexpensive algorithm,
but its natural limiting condition is criticality,
$\partial g(\bar x)\cap \partial h(\bar x)\ne\emptyset$, which can be weaker
than directional stationarity for nonsmooth DC objectives
\cite{pang2017computing}.

This paper focuses on the structured case
\begin{equation}\label{eq:maxh}
        h(x)=\max_{i\in\cI}\psi_i(x),
\end{equation}
where $\cI=\{1,\ldots,q\}$ is finite and each $\psi_i$ is smooth and convex.
This is also the basic block in finite sum-of-max models of the form
\[
        g(x)-\sum_{\ell=1}^p
        \max_{i\in\cI_\ell}\psi_{\ell i}(x).
\]
The single-max case isolates the active-set phenomenon without the additional
Cartesian product of block active sets.  Finite sums of such blocks are treated
through aggregate active vertices after the single-block analysis.
Max and sum-of-max structures appear in sparse and trimmed estimation
\cite{alfons2013sparse,bertsimas2016best}, piecewise-affine models
\cite{magnani2009convex}, exact penalties for complementarity-type
constraints \cite{luo1996exact,fletcher2004solving}, and robust optimization
\cite{bental2009robust,bertsimas2011theory}.  They also expose the limitation
of choosing an active subgradient without checking the active vertices.  At a
point where several $\psi_i$ are active, one vertex of the active
subdifferential may give a useful descent step while an averaged subgradient
may satisfy the weaker criticality condition and stop at a point that has an
immediate descent direction.
Section \ref{sec:problem} makes this distinction precise and gives a
one-dimensional example before the algorithm is introduced.

Several enhanced DC methods address this issue by considering active pieces
separately.  Examples include the algorithmic framework of Pang et al.
\cite{pang2017computing}, decomposition and statistical difference-max
extensions \cite{pang2018decomposition,cui2018composite}, and enhanced
proximal variants \cite{lu2019enhanced} and \cite{lu2019nonmonotone}.
Single-active-piece heuristics reduce some per-iteration costs, but their
practical behaviour is still tied to which active piece is selected.
In independent recent work, Le Thi, Huynh, and Pham Dinh
\cite{lethi2026unified} proposed a unified DCA-type framework for computing
directional stationary points of broad classes of DC programs, covering finite- and
infinite-maximum representations of the subtracted convex component and, more
generally, continuous convex components.  Their contribution is a general
convergence framework based on solving selected convex subproblems; its
randomized variant samples components to reduce the number of subproblems.
By contrast, the present paper takes a more specialized and computational
viewpoint: it focuses on finite
max-structured and sum-of-max DC models and uses randomized active-set
screening based on vertex residuals, random projections, and a
convex-combination LP fallback.  Section \ref{sec:numerics} tests this
active-set selection mechanism in large or combinatorial active sets.
The question motivating RA-DCA is whether randomized directions can summarize
the active subdifferential computationally while still safeguarding the
stronger directional-stationarity condition.

Our contribution is a vertex-first randomized active-set DCA.  The method
projects active gradients onto randomly sampled directions and first evaluates
a sampled vertex residual.  If this residual is large, RA-DCA takes the most
violating active vertex; only in the low-residual regime does it solve a small
linear program for convex-combination weights over the active set.  This
ordering is deliberate: the linear program is a centering mechanism for DCA
criticality, whereas the vertex residual is the directional-stationarity
safeguard that prevents accepting a merely averaged criticality certificate.

The resulting method has a simple computational kernel: its main additional
cost is forming products between a random direction matrix and an
active-gradient matrix.  This structure can be vectorized and is the basis
for the reported numerical implementation.  The main convergence theorem is
proved for the single finite maximum; the exact block corollary records what
must be safeguarded when finite sums of max terms are handled through
aggregate active vertices.  A supporting worst-iterate residual bound is proved
in Appendix \ref{sec:residualcomplexity}; it gives stationarity-complexity
context without changing the main asymptotic theorem.

The computational claims are correspondingly specific.  RA-DCA targets
iterations at which
several active pieces compete and an averaged active subgradient can hide a
descent direction.  In smooth portions of a trajectory, or in problems where
multistart exploration is the dominant issue, the randomized active-set rule
need not be the limiting factor.  The numerical section therefore separates
direct tests of the theorem from block aggregate tests and boundary penalty
diagnostics, so that performance gains are interpreted relative to the amount
of active-set ambiguity present.

\subsection{Model classes and applications}
\label{sec:intromodels}

The finite-max model \eqref{eq:maxh} is structured enough to permit a clean
active-set theory, but broad enough to cover several useful DC constructions.
Several model classes clarify what is covered directly by the present paper
and what is treated through the block extension.

\emph{Piecewise-affine and robust losses.}
If
\begin{equation*}
        h(x)=\max_{i\in\cI}(a_i^\top x+b_i),
\end{equation*}
then \(\partial h(x)\) is exactly the convex hull of the active affine
gradients.  This includes max-affine penalties, worst-case linear models,
polyhedral robust losses, and the signed-pair instances used in Section
\ref{sec:numerics}; see, for example,
\cite{magnani2009convex,bental2009robust,bertsimas2011theory}.  RA-DCA
applies directly, and the randomized projection step reduces to products
between the sampled direction matrix and the active gradient matrix.

\emph{Smooth finite maxima.}
The same framework applies when the pieces are smooth convex functions, for
example
\begin{equation*}
        h(x)=\max_i \{a_i^\top x+\tfrac{\gamma_i}{2}\norm{x}^2\}
        \quad\hbox{or}\quad
        h(x)=\max_i \psi_i(x).
\end{equation*}
The active gradients now change with \(x\), but the active subdifferential is
still the convex hull in \eqref{eq:subdiffh}.  The max-quadratic tests in
Section \ref{sec:numerics} are included to check this non-affine case; the
basic finite-maximum calculus follows standard convex analysis
\cite{rockafellar1970convex,clarke1983optimization}.

\emph{Sparse and trimmed optimization.}
Several feature-selection and robust-estimation penalties use order
statistics or top-\(k\) terms \cite{alfons2013sparse,bertsimas2016best}.  For
example, top-\(k\) absolute-value terms can be written as the maximum over
subsets of size \(k\),
\begin{equation*}
        \norm{x}_{(k)}=\max_{\substack{S\subseteq\{1,\ldots,n\}\\ |S|=k}}
        \sum_{i\in S}|x_i|.
\end{equation*}
After the usual smoothing or sign-pattern treatment away from zero, the
active sets correspond to tied top-\(k\) subsets.  RA-DCA is relevant here
because ties produce many active gradients and averaged choices can be weak.
Large-scale implementations would need implicit active-set generation rather
than explicit enumeration of all subsets.

\emph{Complementarity and infeasibility penalties.}
For a linear complementarity system of the form
\begin{equation*}
        0\le x \perp Mx+q\ge0,
\end{equation*}
violations of nonnegativity can be penalized by terms such as
\[
        \max\{-x_i,\,-(Mx+q)_i,\,0\}.
\]
Such penalties are max-affine locally and
therefore have exactly the active-set structure exploited by RA-DCA.  Exact
penalty and nonlinear programming treatments of complementarity constraints
are classical in MPEC and LCP computation
\cite{luo1996exact,fletcher2004solving}.  A full complementarity merit
function typically contains a sum of these max terms and possibly a
product-type complementarity penalty.  Thus a single max block is covered
directly by the main theorem, while a complete LCP penalty model belongs to
the block viewpoint formalized in Section \ref{sec:blockextension}.  Section
\ref{sec:binarycomp} uses the box-complementarity case
\(0\le x_i\perp 1-x_i\ge0\) to motivate a less symmetric block test.

\emph{Integer and mixed-integer penalties.}
Binary restrictions \(y_i\in\{0,1\}\) are often relaxed with penalties such
as \(y_i(1-y_i)\) on \(0\le y_i\le1\), or with piecewise-linear penalties for
distance to \(\{0,1\}\).  These formulations are naturally DC and can contain
max-affine pieces.  They connect with algebraic DC decompositions of
polynomials \cite{ahmadi2018dc,niu2024dcsos} and with DC cutting-plane
approaches for mixed-binary linear programs \cite{niu2025pdccut}; related
sparse and binary optimization models are discussed in
\cite{bertsimas2016best}.  They are promising applications for active-set DCA
because many binary variables can become nearly tied near fractional values.
Unconstrained binary quadratic programming is a particularly convenient test
case \cite{beasley1990orlibrary} because the quadratic part can be split as
\(Q=Q_+-Q_-\) with
\(Q_\pm\succeq0\), while the binary penalty supplies the max blocks.
Boolean-polynomial dynamical-system methods have also used Max-Cut and QUBO
benchmarks to test continuous relaxations of binary polynomial models
\cite{niu2022boolean}.
However, the most useful mixed-integer penalty models generally combine many
blocks and additional constraints.  They are computational block extensions of
the single-max analysis, whereas the main theorem covers the single-block
case.

\emph{Clustering and sum-of-max models.}
DC formulations of assignment-type clustering or \(k\)-median often contain
a sum of max terms, one for each data point
\cite{lethi2014clustering,bagirov2016clustering}.  The active set then has a
product or block structure.  The present algorithmic idea extends naturally:
construct active gradients blockwise, sketch the aggregate active-gradient
matrix, and safeguard vertices or block vertices.  Section
\ref{sec:blockextension} records the corresponding residual and explains how
the single-max analysis extends when the aggregate active vertices are checked.
Clustering requires application-specific subproblem solvers and data
structures, and is therefore outside the present computational scope.

These examples motivate the current focus on \eqref{eq:maxh}.  It is the
basic active-set block that appears inside more elaborate sparse,
complementarity, mixed-integer, and clustering models.  The paper therefore
separates the analysis into a single-block theory and an exact aggregate-vertex
corollary for finite block sums, with computational block experiments used to
illustrate the latter block viewpoint rather than extend the theorem.

\subsection{Organization}

Section \ref{sec:problem} formalizes the finite-max DC model and distinguishes
criticality from directional stationarity.  Section \ref{sec:algorithm}
introduces RA-DCA, including the vertex-first safeguard and the
convex-combination LP fallback.  Section \ref{sec:convergence} proves descent,
almost-sure
directional stationarity, and the exact block extension.  Section
\ref{sec:implementation} records the implementation choices relevant to
reproducibility.  Section \ref{sec:numerics} reports the MATLAB experiments,
and Section \ref{sec:conclusion} closes the main text.  The appendices contain
supporting material rather than additional main claims: Appendix
\ref{sec:residualcomplexity} gives a worst-iterate residual bound, Appendix
\ref{sec:appendixblock} records the approximate block safeguard and the
numerical-active-set variants, Appendix \ref{sec:quboappendix} gives the
additional QUBO diagnostics, and Appendix \ref{sec:lpbackendappendix} reports
the LP and safeguard diagnostics.

\section{Problem class and stationarity}
\label{sec:problem}

For $x\in\R^n$, define the active index set
\begin{equation*}
        \cA(x):=\{i\in\cI:\ \psi_i(x)=h(x)\}.
\end{equation*}
Since $h$ is a finite maximum of smooth convex functions,
\begin{equation}\label{eq:subdiffh}
        \partial h(x)=
        \conv\{\nabla \psi_i(x): i\in \cA(x)\}.
\end{equation}
When $g$ is differentiable, the classical DCA criticality condition is
\begin{equation}\label{eq:critical}
        \nabla g(x)\in \partial h(x).
\end{equation}
By contrast, $x$ is directional-stationary for \eqref{eq:problem} if
\begin{equation}\label{eq:dstationarydef}
        F'(x;d)=\ip{\nabla g(x)}{d}
        - \max_{i\in\cA(x)}\ip{\nabla\psi_i(x)}{d} \ge 0
        \quad \hbox{for all } d\in\R^n .
\end{equation}
The following residual is useful both theoretically and computationally:
\begin{equation}\label{eq:dres}
        R_{\rm d}(x):=
        \max_{i\in\cA(x)}
        \norm{\nabla g(x)-\nabla\psi_i(x)} .
\end{equation}

\begin{lemma}\label{lem:dres}
For \eqref{eq:problem}--\eqref{eq:maxh}, \(x\) is
directional-stationary if and only if \(R_{\rm d}(x)=0\).  The residual also
controls the convex criticality residual:
\begin{equation*}
        \dist\bigl(\nabla g(x),\partial h(x)\bigr)
        \le R_{\rm d}(x),
\end{equation*}
and the inequality can be strict.
\end{lemma}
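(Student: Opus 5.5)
The plan is to treat the three claims separately, working directly from the formula for $F'(x;d)$ in \eqref{eq:dstationarydef}. Write $v:=\nabla g(x)$ and $a_i:=\nabla\psi_i(x)$ for $i\in\cA(x)$. Directional stationarity says $\ip{v}{d}\ge\max_{i\in\cA(x)}\ip{a_i}{d}$ for all $d$, which is the same as $\ip{v-a_i}{d}\ge 0$ for every $i\in\cA(x)$ and every $d\in\R^n$.

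For the equivalence, fix $i\in\cA(x)$ and take $d=-(v-a_i)$. This gives $-\norm{v-a_i}^2\ge0$, so $v=a_i$. Hence every active gradient equals $\nabla g(x)$, and $R_{\rm d}(x)=0$. Conversely, if $R_{\rm d}(x)=0$ then $a_i=v$ for all active $i$, so $\max_{i\in\cA(x)}\ip{a_i}{d}=\ip{v}{d}$ and $F'(x;d)=0\ge0$ for every $d$. The only ingredient is the directional-derivative formula for a finite max of smooth functions, which the paper already uses in \eqref{eq:dstationarydef}. I would cite it rather than reprove it (Danskin / \cite{rockafellar1970convex}).

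For the inequality, use \eqref{eq:subdiffh}: every $a_i$ with $i\in\cA(x)$ lies in $\partial h(x)$, and $\cA(x)\neq\emptyset$ because the max is attained over a finite set. Then
\[
\dist\bigl(v,\partial h(x)\bigr)\le \min_{i\in\cA(x)}\norm{v-a_i}\le \max_{i\in\cA(x)}\norm{v-a_i}=R_{\rm d}(x).
\]

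For strictness, I would build an explicit one-dimensional example in the spirit of the one promised in Section~\ref{sec:problem}. Take $n=1$, $g\equiv 0$ and $h(x)=\max\{x,-x\}=|x|$, and evaluate at $\bar x=0$. Then $\cA(0)=\{1,2\}$ with gradients $\pm1$, so $\partial h(0)=[-1,1]\ni 0=\nabla g(0)$. The criticality distance is therefore $0$, while $R_{\rm d}(0)=1$. Consistently with the first part, $F=-|x|$ has $F'(0;d)=-|d|<0$, so $0$ is critical but not directionally stationary.

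I do not expect a real obstacle. The only point needing care is the first part: it must use the full range $d\in\R^n$ and test each active index separately. This is why directional stationarity collapses to equality of all active gradients with $\nabla g(x)$, which is a much stronger condition than membership of $\nabla g(x)$ in their convex hull.
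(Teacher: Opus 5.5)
Your proposal is correct and follows the paper's proof essentially step by step. It uses the same reduction to $\ip{\nabla g(x)-\nabla\psi_i(x)}{d}\ge0$ with the test direction $d=-(\nabla g(x)-\nabla\psi_i(x))$, the same min-then-max chain for the distance bound, and a strictness example at $x=0$ with $h=|x|$; the only difference is that you take $g\equiv0$ where Example~\ref{ex:onedtrap} uses $g(x)=\frac12x^2$, which is immaterial since only $\nabla g(0)=0$ enters.
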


\begin{proof}
Condition \eqref{eq:dstationarydef} is equivalent to
\begin{equation*}
        \ip{\nabla g(x)}{d}
        \ge \max_{i\in\cA(x)}\ip{\nabla\psi_i(x)}{d}
        \quad \hbox{for all } d\in\R^n .
\end{equation*}
The maximum is no larger than the left-hand side if and only if every active
linear form is no larger than the left-hand side.  Hence this is equivalent
to
\begin{equation*}
        \ip{\nabla g(x)-\nabla\psi_i(x)}{d}\ge 0
        \quad \hbox{for all } d\in\R^n,\ i\in\cA(x).
\end{equation*}
Fix an active index \(i\) and set
\(w_i=\nabla g(x)-\nabla\psi_i(x)\).  Since the last inequality holds for
every direction \(d\), it holds in particular for \(d=-w_i\), giving
\(-\norm{w_i}^2\ge0\).  Thus \(w_i=0\), and
\(\nabla g(x)=\nabla\psi_i(x)\) for every active \(i\).  This is precisely
\(R_{\rm d}(x)=0\).  Conversely, if \(R_{\rm d}(x)=0\), then all active
gradients equal \(\nabla g(x)\), so
\[
        \max_{i\in\cA(x)}\ip{\nabla\psi_i(x)}{d}
        =\ip{\nabla g(x)}{d}
        \quad\hbox{for all }d,
\]
and \eqref{eq:dstationarydef} follows.

For the distance estimate, \eqref{eq:subdiffh} shows that each active
gradient \(\nabla\psi_i(x)\) belongs to \(\partial h(x)\).  Therefore
\[
        \dist\bigl(\nabla g(x),\partial h(x)\bigr)
        \le
        \min_{i\in\cA(x)}
        \norm{\nabla g(x)-\nabla\psi_i(x)}
        \le R_{\rm d}(x).
  \]
Strictness is demonstrated by Example \ref{ex:onedtrap}.
\qed
\end{proof}

\begin{example}[A one-dimensional nonstationary critical point]\label{ex:onedtrap}
Let \(g(x)=\frac12 x^2\) and \(h(x)=\max\{x,-x\}=|x|\).  Then
\[
        F(x)=\frac12x^2-|x|
        =\frac12(|x|-1)^2-\frac12 .
\]
At \(x=0\), \(\nabla g(0)=0\) and
\(\partial h(0)=[-1,1]\).  Hence \(0\in\partial h(0)\), so \(x=0\) is a
DCA critical point and
\(\dist(\nabla g(0),\partial h(0))=0\).  It is not
directional-stationary, because the active gradients are \(1\) and \(-1\),
and \(R_{\rm d}(0)=1\).  Equivalently, \(F'(0;d)=-|d|<0\) for every
\(d\ne0\).

By contrast, \(x=1\) and \(x=-1\) are single-active points satisfying
\(\nabla g(1)=1=\nabla x\) and \(\nabla g(-1)=-1=\nabla(-x)\).  Thus
both points are directional-stationary; they are also the global minimizers.
If traditional DCA uses the centered subgradient \(v=0\in\partial h(0)\),
then its subproblem gives \(x^+=v=0\), so the method can remain at the
non-directional-stationary critical point.  Choosing an extreme active
gradient instead moves the iterate to one of the two directional-stationary
minimizers.
\end{example}

\begin{figure}[!ht]
\centering
\begin{tikzpicture}
\begin{axis}[
    width=0.72\linewidth,
    height=0.38\linewidth,
    axis lines=middle,
    xmin=-3.05, xmax=3.05,
    ymin=-1.05, ymax=2.25,
    xlabel={$x$},
    ylabel={$F(x)$},
    xtick={-3,-2,-1,0,1,2,3},
    ytick={-0.5,0,0.5,1,1.5,2},
    grid=both,
    minor tick num=1,
    tick label style={font=\scriptsize},
    label style={font=\small},
    every axis plot/.append style={line width=1.1pt},
]
\addplot[blue, domain=-3:0, samples=120] {0.5*x^2 + x};
\addplot[blue, domain=0:3, samples=120] {0.5*x^2 - x};
\addplot[only marks, mark=*, mark size=2.1pt, red!75!black]
    coordinates {(-1,-0.5) (1,-0.5)};
\addplot[only marks, mark=*, mark size=2.1pt, orange!85!black]
    coordinates {(0,0)};
\node[anchor=north east, font=\scriptsize, red!75!black, yshift=-1pt]
    at (axis cs:-1,-0.5) {$(-1,-1/2)$};
\node[anchor=north west, font=\scriptsize, red!75!black, yshift=-1pt]
    at (axis cs:1,-0.5) {$(1,-1/2)$};
\node[anchor=south west, font=\scriptsize, orange!85!black]
    at (axis cs:0,0) {critical non-d-SP};
\end{axis}
\end{tikzpicture}
\caption{The one-dimensional nonstationary critical point in Example \ref{ex:onedtrap}:
\(x=0\) is a DCA critical point but is not directional-stationary, while
\(x=\pm1\) are directional-stationary global minimizers.}
\label{fig:onedtrap}
\end{figure}

Lemma \ref{lem:dres} is the key structural distinction used by RA-DCA.
The criticality test asks whether \(\nabla g(x)\) lies in the convex hull of
the active gradients.  Directional stationarity asks for more: every active
vertex must give the same first-order response as \(\nabla g(x)\).  Thus an
averaged active subgradient may be a valid DCA certificate while still hiding a
descent direction exposed by an extreme active gradient.  RA-DCA separates
these two tasks: a convex combination is used for stable centering, while a
vertex-level residual provides the stationarity safeguard.
The condition is not vacuous: at points with a single active piece it reduces
to the usual smooth stationarity equation
\(\nabla g(x)=\nabla\psi_i(x)\).  At a genuine nonsmooth tie with different
active gradients, however, directional stationarity is possible only if those
gradients all agree with \(\nabla g(x)\).  This is exactly the point: for an
unconstrained objective \(g-\max_i\psi_i\), a nonzero active-gradient spread
creates a feasible descent direction, so a convex-hull criticality certificate
is not enough.

\section{The RA-DCA algorithm}
\label{sec:algorithm}

RA-DCA is a randomized active-set DCA method that keeps the usual convex DCA
subproblem while replacing a deterministic active-gradient choice by a
vertex-first randomized active-set calculation.
The convex-combination LP is retained as a centering fallback, but it is
called only after the sampled vertex residual is below the safeguard
threshold.  At iteration \(k\), the method first forms the numerical active
set
\begin{equation*}
        \cA_k=\{i\in\cI:\ h(x^k)-\psi_i(x^k)\le \eps_k\},
\end{equation*}
where $\eps_k\ge0$ is a numerical active-set tolerance.  Let
\begin{equation*}
        G_k = [\,\nabla\psi_i(x^k)\,]_{i\in\cA_k}\in\R^{n\times r_k},
        \qquad r_k=|\cA_k|,
\end{equation*}
and let $g_k=\nabla g(x^k)$.  A random direction matrix
$D_k\in\R^{m_k\times n}$ is generated by sampling rows uniformly on the unit
sphere and scaling by $\sqrt{n/m_k}$, or equivalently by using Gaussian rows
with variance $1/m_k$.  Thus $D_k z$ is a randomized sketch of $z$.

Algorithm \ref{alg:radca} summarizes the full iteration in a self-contained
form; the subsequent subsections define the safeguard residual, vertex rule,
LP fallback, and proximal DCA subproblem.

\begin{algorithm}[!ht]
\caption{RA-DCA}
\label{alg:radca}
\begin{algorithmic}[1]
\Require \(x^0\); tolerances \(\{\eps_k\}\), \(\{\tau_k\}\); sample-size rule
for \(m_k\in\mathbb N\); \(\sigma\ge0\)
\For{\(k=0,1,2,\ldots\)}
    \State Form
    \(\cA_k=\{i:\ h(x^k)-\psi_i(x^k)\le\eps_k\}\) and
    \(G_k=[\nabla\psi_i(x^k)]_{i\in\cA_k}\)
    \State Set \(g_k=\nabla g(x^k)\)
    \State Choose the number \(m_k\) of sampled directions
    \State Draw \(D_k\in\R^{m_k\times n}\)
    \State Compute
    \(\widehat R_k=\max_{i\in\cA_k}\norm{D_k(\nabla\psi_i(x^k)-g_k)}\)
    \If{\(\widehat R_k>\tau_k\)}
        \State Choose
        \(i_k\in\argmax_{i\in\cA_k}
        \norm{D_k(\nabla\psi_i(x^k)-g_k)}\)
        \State Set \(v^k=\nabla\psi_{i_k}(x^k)\)
    \Else
        \State Solve the active convex-combination LP for \(\alpha^k\)
        and set \(v_{\rm c}^k=G_k\alpha^k\)
        \State Set \(v^k=v_{\rm c}^k\)
    \EndIf
    \State Compute
    \(x^{k+1}=\argmin_x
    \{g(x)-\ip{v^k}{x-x^k}
    +\frac{\sigma}{2}\norm{x-x^k}^2\}\)
    \If{\(\norm{x^{k+1}-x^k}\) and the stationarity residual are below the
    requested tolerances}
        \State Stop
    \EndIf
\EndFor
\end{algorithmic}
\end{algorithm}

\subsection{Reference active-gradient rules}

Algorithm \ref{alg:radca} is the proposed method.  The numerical section also
uses three reference DCA rules that differ only in the way the active
linearization vector \(v^k\) is chosen; after that choice, they all use the
same convex update subproblem in Algorithm \ref{alg:radca}.  For the
single finite maximum, the rules are as follows.  Centered DCA uses the
active-gradient average
\[
        v^k_{\rm cen}
        =
        \frac{1}{r_k}\sum_{i\in\cA_k}\nabla\psi_i(x^k).
\]
Random-vertex DCA samples a single active piece,
\[
        v^k_{\rm rand}
        =
        \nabla\psi_{I_k}(x^k),
        \qquad I_k\sim\operatorname{Unif}(\cA_k).
\]
Full-vertex DCA uses the exact active-set residual scan,
\[
        v^k_{\rm full}
        =
        \nabla\psi_{i_k}(x^k),
        \qquad
        i_k\in\argmax_{i\in\cA_k}
        \norm{\nabla\psi_i(x^k)-g_k}.
\]
Thus centered DCA is a convex-hull based criticality rule.  Random-vertex DCA
tests the effect of choosing a single active piece.  Full-vertex DCA is the
exact active-set screening rule when the full residual scan is affordable.
RA-DCA can be read as a safeguarded randomized variant: it uses the sketched
residual to decide whether to take a violating active vertex, and uses the LP
candidate only when this safeguard does not trigger.

\subsection{Active convex-combination LP}

The fallback used when the vertex safeguard does not trigger is a convex
combination of active gradients.  For fixed \(x^k\), a DCA step is determined
by a linearization vector \(v^k\in\partial h(x^k)\).  Since \(h\) is a finite
maximum, this amounts to choosing simplex weights on active gradients:
\[
        v^k=G_k\alpha,\qquad \one^\top\alpha=1,\quad \alpha\ge0 .
\]
The LP \eqref{eq:lp} is introduced to choose this vector in a way that is consistent
with the DCA criticality condition \eqref{eq:critical}.  If \(x^k\) is
critical, then \(g_k\in\partial h(x^k)\), so there exists a simplex vector
\(\alpha\) satisfying \(G_k\alpha=g_k\).  Using such a vector makes the DCA
subproblem locally balanced at \(x^k\), whereas an arbitrary active vertex
may generate a spurious move even when DCA criticality already holds.
This also identifies the regime in which the LP can have a visible numerical
effect.  If the vertex residual is large, the safeguard deliberately rejects
the convex combination and the LP is not used.  Under the embedding condition,
when the safeguard does not trigger, all active vertex mismatches are already
small in the full space up to the sketching distortion.  The LP therefore
should not be expected to produce large objective improvements far from
stationarity.  Its role is local: it stabilizes the DCA step when the active
set contains nearly tied pieces, or when an \(\eps_k\)-active set includes
pieces that are inactive for the exact maximum but indistinguishable at the
chosen numerical tolerance.

Away from exact criticality, the same idea asks for a convex combination
\(G_k\alpha\) that is close to \(g_k\).  The full-space projection
\(\min_{\alpha\in\Delta}\|G_k\alpha-g_k\|\) is a simplex-constrained
quadratic problem.  RA-DCA does not solve this Euclidean projection.  Instead
it uses a randomized Chebyshev surrogate based on sampled directional
derivatives: it requires the mismatch to be small in the \(\ell_\infty\) norm
after applying \(D_k\), namely \(D_k(G_k\alpha-g_k)\).  The two formulations
coincide only in special
cases, for example if the full-space norm is replaced by \(\ell_\infty\) and
the sketch is \(D_k=I\).  Thus the LP is a centering device for selecting a
stable DCA linearization, not a certificate of directional stationarity; the
latter is checked by the vertex safeguard.

The formulation therefore chooses $\alpha\in\R^{r_k}$ in the simplex and the
smallest scalar \(t\) that bounds the sampled directional mismatch in the
\(\ell_\infty\) norm:
\begin{equation}\label{eq:lp}
\begin{array}{ll}
 \displaystyle \min_{\alpha,t} & t\\[1mm]
 \hbox{s.t.} &
        -t\one \le D_k(G_k\alpha-g_k)\le t\one,\\
      & \one^\top \alpha=1,\quad \alpha\ge0,\quad t\ge0.
\end{array}
\end{equation}
The LP has only $r_k+1$ variables and $2m_k+1$ affine constraints.  Its
coefficient matrix is
\begin{equation*}
        D_kG_k =
        \bigl[\ip{d_j}{\nabla\psi_i(x^k)}\bigr]_{j,i},
        \qquad
        D_kg_k =
        \bigl[\ip{d_j}{\nabla g(x^k)}\bigr]_j,
\end{equation*}
which is a matrix-matrix product plus a matrix-vector product.  This is the
part of the method that can be accelerated directly on a GPU.
Except in degenerate cases, such as a singleton active set, this Chebyshev
approximation problem over the simplex does not have a useful closed-form
solution; it is solved as a small continuous LP.

When the vertex safeguard is not triggered, let
\(\alpha^k\) solve \eqref{eq:lp} and define
\begin{equation*}
        v_{\rm c}^k=G_k\alpha^k\in\conv\{\nabla\psi_i(x^k):i\in\cA_k\}.
\end{equation*}
This vector is the active convex-combination candidate.  It is useful for a
stable DCA step, but it is not sufficient for directional stationarity.

\subsection{Vertex safeguard}

The reason for the safeguard is the distinction in Lemma \ref{lem:dres}.
The LP \eqref{eq:lp} can make a convex combination \(G_k\alpha\) close to
\(g_k\), which is the right test for DCA criticality.  This can nevertheless
hide a directional-stationarity violation: different active gradients may
cancel in their convex hull even though some active vertex is far from
\(g_k\).  Directional stationarity rules out this cancellation and requires
all active vertices to agree with \(g_k\).  The safeguard therefore checks
the active vertices themselves.

The sampled directional-stationarity residual is
\begin{equation}\label{eq:sampledres}
        \widehat R_k =
        \max_{i\in\cA_k}\norm{D_k(\nabla\psi_i(x^k)-g_k)} .
\end{equation}
It is a randomized proxy for the vertex residual \(R_{\rm d}(x^k)\).  When
this residual is large, using the convex-combination candidate \(v_{\rm c}^k\)
would risk accepting a merely critical point.  RA-DCA instead selects the
most violating active vertex,
which forces the next DCA subproblem to respond to the detected violation.
If the exact residual \(R_{\rm d}(x^k)\) can be scanned cheaply, then the
corresponding exact full-vertex rule is natural: choose
\[
        i_k\in\argmax_{i\in\cA_k}
        \norm{\nabla\psi_i(x^k)-g_k}
\]
and set \(v^k=\nabla\psi_{i_k}(x^k)\) whenever this maximum is nonzero.  This
rule is the full-vertex DCA baseline in Section \ref{sec:numerics}, and it
is the ideal active-set screening rule in the single finite-maximum case.
For an explicit single maximum with all active gradients already materialized,
this exact scan costs \(O(nr_k)\) and is usually no more expensive than a
dense sketched scan; setting \(D_k=I\) in the safeguard recovers precisely this
full-vertex rule.  In such explicit small- or medium-scale cases, the
full-vertex scan is the appropriate reference.  The randomized safeguard is
aimed at settings in which only directional products \(D_kG_k\) are formed,
when the same projected matrix is reused by the LP, or when the exact
aggregate-vertex scan in a sum-of-max model is combinatorial.  Moreover, when
the sampled residual is already below \(\tau_k\), the identity of the largest
sketched vertex is no longer a robust stationarity signal; a
convex-combination step is then used as a stable DCA centering step.  The
convergence proof uses \(\tau_k\downarrow0\), so this LP branch can persist
near an accumulation point only when the exact vertex residual is also small.
If $\widehat R_k$ is larger than a tolerance $\tau_k$, RA-DCA selects the
most violating active vertex
\begin{equation}\label{eq:vertexchoice}
        i_k\in\argmax_{i\in\cA_k}
        \norm{D_k(\nabla\psi_i(x^k)-g_k)}
\end{equation}
and sets $v^k=\nabla\psi_{i_k}(x^k)$.  In this case the LP
\eqref{eq:lp} is not needed.  Otherwise RA-DCA solves \eqref{eq:lp} and sets
$v^k=v_{\rm c}^k$.  This ordering is computationally important for large
active sets: the LP is used only when the sampled vertex residual is already
below the safeguard tolerance.  The next iterate is the proximal DCA point
\begin{equation}\label{eq:update}
        x^{k+1}=\argmin_x
        \left\{
        g(x)-\ip{v^k}{x-x^k}
        +\frac{\sigma}{2}\norm{x-x^k}^2
        \right\}.
\end{equation}
When $g$ is already strongly convex, $\sigma$ may be set to zero.  Otherwise
$\sigma>0$ makes \eqref{eq:update} strongly convex and controls the step.
Under this condition the minimizer in \eqref{eq:update} is unique.  The
integer \(m_k\) is the number of sampled directions at iteration \(k\).  It
may be fixed in advance or chosen from the active set after \(G_k\) is formed;
the embedding-based choice used in the analysis is given in
\eqref{eq:mkbudget}.

The two tolerances have different roles.  The parameter \(\eps_k\) defines the
numerical active set.  It should be large enough to include pieces that are
tied up to evaluation error, but small enough that inactive pieces do not
dominate the DCA linearization.  The convergence analysis assumes a
summable sequence, for instance
\(\eps_k=\eps_0/(k+1)^{1+\beta}\) with \(\beta>0\).  The parameter
\(\tau_k\) is the safeguard threshold: values above \(\tau_k\) force an active
vertex step, while values below \(\tau_k\) allow the LP-centered step.  The
almost-sure result requires \(\tau_k\downarrow0\), and the residual bound in
Appendix \ref{sec:residualcomplexity} shows that
\(\tau_k=O((k+1)^{-1/2})\), or a faster decay, is consistent with the
\(O(k^{-1/2})\) worst-iterate stationarity scale.  In finite-precision
experiments we use fixed machine-level active-set and safeguard tolerances as
finite-horizon approximations of these asymptotic schedules; these values are
chosen on the same scale as the reported stationarity and step tolerances.

\section{Convergence analysis}
\label{sec:convergence}

\subsection{Basic assumptions and descent}
\label{sec:descentanalysis}

The analysis follows the numerical active sets used in Algorithm
\ref{alg:radca}.  This is slightly different from the exact-active-set DCA
argument because a gradient selected from an \(\eps_k\)-active piece need not
belong to the exact subdifferential \(\partial h(x^k)\).  The descent estimate
therefore carries an \(\eps_k\) error term.  The second technical point is
active-set consistency at accumulation points: a piece active at the limit can
be inactive along a one-sided sequence approaching that limit, so the theorem
requires the numerical active sets to capture such limiting active pieces.

\begin{assumption}\label{ass:basic}
The following conditions hold.
\begin{enumerate}
\item $g$ is convex, continuously differentiable, and has $L_g$-Lipschitz
      gradient.
\item Each $\psi_i$ is convex and continuously differentiable.
\item The numerical active-set tolerances satisfy
      \(\eps_k\ge0\) and \(E_\eps:=\sum_{k=0}^\infty\eps_k<\infty\).
\item $F$ is bounded below and the level set
      \(\{x:\ F(x)\le F(x^0)+E_\eps\}\) is compact.
\item $g+\frac{\sigma}{2}\norm{\cdot-x^k}^2$ is $\mu$-strongly convex
      uniformly in $k$ for some $\mu>0$.  Equivalently, either $g$ is
      strongly convex and $\sigma\ge0$, or $\sigma>0$ supplies the missing
      strong convexity.
\end{enumerate}
\end{assumption}

The next lemma is the standard descent estimate for proximal DCA steps, with
the additional error caused by numerical active sets; see, for example, the
convergence-analysis framework in \cite{niu2022convergence}.

\begin{lemma}[Numerical-active-set descent]\label{lem:descent}
Under Assumption \ref{ass:basic}, suppose
\[
        v^k\in\conv\{\nabla\psi_i(x^k):i\in\cA_k\},
        \qquad
        \cA_k=\{i:\ h(x^k)-\psi_i(x^k)\le\eps_k\},
\]
and \(x^{k+1}\) solves \eqref{eq:update}.  Then
\begin{equation}\label{eq:descent}
        F(x^{k+1})\le
        F(x^k)+\eps_k-\frac{\mu}{2}\norm{x^{k+1}-x^k}^2 .
\end{equation}
Consequently, the iterates remain in the enlarged level set from Assumption
\ref{ass:basic}, \(\sum_k\norm{x^{k+1}-x^k}^2<\infty\), and
\(\norm{x^{k+1}-x^k}\to0\).
\end{lemma}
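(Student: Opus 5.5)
The plan is to prove the descent inequality \eqref{eq:descent} by combining the strong convexity of the proximal subproblem with an approximate (\(\eps_k\)-) subgradient inequality for \(h\). The consequences then follow by telescoping.

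\emph{Step 1 (optimality of the subproblem).} Set
\[
\Phi_k(x):=g(x)-\ip{v^k}{x-x^k}+\frac{\sigma}{2}\norm{x-x^k}^2 .
\]
By Assumption \ref{ass:basic}(5), \(\Phi_k\) is \(\mu\)-strongly convex. Since \(x^{k+1}\) minimizes it, the quadratic growth inequality gives \(\Phi_k(x^k)\ge\Phi_k(x^{k+1})+\frac{\mu}{2}\norm{x^{k+1}-x^k}^2\). Written out, this is
\[
g(x^{k+1})\le g(x^k)+\ip{v^k}{x^{k+1}-x^k}-\frac{\sigma}{2}\norm{x^{k+1}-x^k}^2-\frac{\mu}{2}\norm{x^{k+1}-x^k}^2 .
\]
Dropping the nonpositive \(\sigma\) term keeps the bound with \(-\frac{\mu}{2}\norm{x^{k+1}-x^k}^2\).

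\emph{Step 2 (approximate linearization of \(h\)).} This is the only place where the numerical active set matters, and I expect it to be the main point to handle carefully. Write \(v^k=\sum_{i\in\cA_k}\alpha_i\nabla\psi_i(x^k)\) with simplex weights \(\alpha\). For each \(i\in\cA_k\), convexity of \(\psi_i\) and the definition of \(\cA_k\) give
\[
h(y)\ge\psi_i(y)\ge\psi_i(x^k)+\ip{\nabla\psi_i(x^k)}{y-x^k}\ge h(x^k)-\eps_k+\ip{\nabla\psi_i(x^k)}{y-x^k}.
\]
Averaging with the weights \(\alpha_i\) yields \(h(y)\ge h(x^k)+\ip{v^k}{y-x^k}-\eps_k\), so \(v^k\) is an \(\eps_k\)-subgradient of \(h\) at \(x^k\). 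Take \(y=x^{k+1}\) and subtract from Step 1 to get \eqref{eq:descent}.

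\emph{Step 3 (consequences).} Sum \eqref{eq:descent} from \(0\) to \(K-1\):
\[
F(x^K)+\frac{\mu}{2}\sum_{k<K}\norm{x^{k+1}-x^k}^2\le F(x^0)+\sum_{k<K}\eps_k\le F(x^0)+E_\eps .
\]
The first consequence is that every iterate lies in the enlarged level set of Assumption \ref{ass:basic}(4). The second is that, since \(F\) is bounded below by some \(F_*\), the partial sums of \(\norm{x^{k+1}-x^k}^2\) are bounded by \(2(F(x^0)+E_\eps-F_*)/\mu\). Hence the series converges and its terms tend to zero.

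One remark belongs here. The lemma assumes \(v^k\in\conv\{\nabla\psi_i(x^k):i\in\cA_k\}\). This covers both branches of Algorithm \ref{alg:radca}: the vertex branch is a degenerate convex combination, and the LP branch gives \(G_k\alpha^k\) with \(\alpha^k\) in the simplex. Thus no branch-specific argument is needed.
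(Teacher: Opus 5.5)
Your proposal is correct and follows the paper's proof essentially step for step: quadratic growth of the $\mu$-strongly convex subproblem at its minimizer (evaluated at $x^k$), the $\eps_k$-subgradient inequality for $h$ obtained by averaging the convexity inequalities of the $\eps_k$-active pieces, and telescoping the resulting descent estimate against the lower bound on $F$ and $E_\eps$. The only cosmetic difference is that you apply $\psi_i(x^k)\ge h(x^k)-\eps_k$ piece by piece before averaging, whereas the paper averages first; the two orderings give the same inequality.
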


\begin{proof}
Let
\[
        q_k(x)=g(x)-\ip{v^k}{x-x^k}
        +\frac{\sigma}{2}\norm{x-x^k}^2 .
\]
By Assumption \ref{ass:basic}, \(q_k\) is \(\mu\)-strongly convex, and
\(x^{k+1}\) is its minimizer.  A \(\mu\)-strongly convex function satisfies
the quadratic growth inequality
\[
        q_k(y)\ge q_k(x^{k+1})
        +\frac{\mu}{2}\norm{y-x^{k+1}}^2
        \quad\hbox{for all }y .
\]
Taking \(y=x^k\) and writing \(s^k=x^{k+1}-x^k\) gives
\begin{equation*}
        q_k(x^{k+1})+\frac{\mu}{2}\norm{s^k}^2
        \le q_k(x^k)=g(x^k),
\end{equation*}
or equivalently
\begin{equation}\label{eq:qdescent}
        g(x^{k+1})-\ip{v^k}{s^k}
        +\frac{\sigma}{2}\norm{s^k}^2
        \le g(x^k)-\frac{\mu}{2}\norm{s^k}^2 .
\end{equation}
Write \(v^k=\sum_{i\in\cA_k}\alpha_i\nabla\psi_i(x^k)\), where
\(\alpha_i\ge0\) and \(\sum_{i\in\cA_k}\alpha_i=1\).  Convexity of each
\(\psi_i\) gives
\[
        h(x^{k+1})
        \ge
        \sum_{i\in\cA_k}\alpha_i\psi_i(x^{k+1})
        \ge
        \sum_{i\in\cA_k}\alpha_i\psi_i(x^k)+\ip{v^k}{s^k}.
\]
Because \(i\in\cA_k\) implies \(\psi_i(x^k)\ge h(x^k)-\eps_k\), we obtain
\[
        -h(x^{k+1})\le -h(x^k)+\eps_k-\ip{v^k}{s^k}.
\]
Adding this inequality to \eqref{eq:qdescent} and dropping the nonnegative
term \(\frac{\sigma}{2}\norm{s^k}^2\) gives \eqref{eq:descent}.

Let \(F_{\inf}\) be a lower bound of \(F\).  Summing
\eqref{eq:descent} from \(k=0\) to \(N\) yields
\[
        \frac{\mu}{2}\sum_{k=0}^N\norm{x^{k+1}-x^k}^2
        \le F(x^0)-F(x^{N+1})+\sum_{k=0}^N\eps_k
        \le F(x^0)-F_{\inf}+\sum_{k=0}^\infty\eps_k .
\]
The same recursion gives
\[
        F(x^{N+1})\le F(x^0)+\sum_{k=0}^N\eps_k\le F(x^0)+E_\eps ,
\]
so the iterates remain in the compact enlarged level set.  Letting
\(N\to\infty\) proves the summability, and therefore
\(\norm{x^{k+1}-x^k}\to0\).
\qed
\end{proof}

\subsection{Random embeddings and sampling budget}
\label{sec:embeddingbudget}

The random directions enter only through a subspace-embedding property.  At
iteration \(k\), after \(x^k\) and the active set have been formed, the only
vectors whose norms must be compared are linear combinations of the gradient
vectors already present in the safeguard.  Let
\begin{equation*}
        \mathcal{S}_k=\operatorname{span}
        \{g_k,\nabla\psi_i(x^k):i\in\cA_k\}.
\end{equation*}
Thus the embedding is required on the random but finite-dimensional subspace
\(\mathcal S_k\), conditionally on the past, not on all of \(\R^n\).

\begin{assumption}\label{ass:embed}
For each $k$, $D_k$ is independent of the past and satisfies, with
probability at least $1-\delta_k$,
\begin{equation}\label{eq:embedding}
        (1-\eta)\norm{z}\le\norm{D_kz}\le(1+\eta)\norm{z}
        \quad\hbox{for all }z\in\mathcal{S}_k ,
\end{equation}
where $\eta\in(0,1)$ and $\sum_k\delta_k<\infty$.
\end{assumption}

\begin{lemma}[Random embedding of the active span]\label{lem:randomembed}
Let \(\mathcal S\subset\R^n\) be a fixed \(d\)-dimensional subspace.  Let
\(D\in\R^{m\times n}\) have either independent Gaussian rows with covariance
\(I/m\), or independent rows sampled uniformly from the unit sphere and
scaled by \(\sqrt{n/m}\).  There is a numerical constant \(C\) such that,
for any \(\eta\in(0,1)\) and \(\delta\in(0,1)\), if
\begin{equation}\label{eq:samplecomplexity}
        m \ge C\eta^{-2}\left(d+\log\frac{1}{\delta}\right),
\end{equation}
then, with probability at least \(1-\delta\),
\[
        (1-\eta)\norm{z}\le\norm{Dz}\le(1+\eta)\norm{z}
        \quad\hbox{for all }z\in\mathcal S .
\]
\end{lemma}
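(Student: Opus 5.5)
This is the standard subspace Johnson--Lindenstrauss argument: reduce to the unit sphere of $\mathcal S$, control a finite net by scalar concentration plus a union bound, and extend from the net to the whole sphere.

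\emph{Reduction.} Fix an orthonormal basis $U\in\R^{n\times d}$ of $\mathcal S$, so every $z\in\mathcal S$ is $z=Uy$ with $\norm{z}=\norm{y}$. By homogeneity it suffices to show that, with probability at least $1-\delta$,
\[
\bigl|\norm{DUy}^2-1\bigr|\le\eta \quad\text{for all unit } y\in\R^d .
\]
For $\eta<1$ this gives $\sqrt{1-\eta}\ge 1-\eta$ and $\sqrt{1+\eta}\le 1+\eta$, which is the claimed two-sided bound.

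\emph{Fixed vector.} For a fixed unit $z\in\mathcal S$ write $\norm{Dz}^2=\sum_{j=1}^m\ip{d_j}{z}^2$, where $d_j$ are the rows of $D$.

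In the Gaussian case, $\ip{d_j}{z}\sim N(0,1/m)$ independently across $j$. Hence $m\norm{Dz}^2\sim\chi^2_m$, and the Laurent--Massart (or Bernstein) bound gives
\[
\Pr\bigl(|\norm{Dz}^2-1|>t\bigr)\le 2e^{-cmt^2}, \qquad t\in(0,1).
\]

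In the spherical case, each $\sqrt{n}\,\ip{u_j}{z}$ with $u_j$ uniform on the sphere is a mean-zero sub-Gaussian variable with unit second moment and $O(1)$ sub-Gaussian norm. Then $\ip{d_j}{z}^2 = (n/m)\ip{u_j}{z}^2$ are independent sub-exponential variables with mean $1/m$, and Bernstein's inequality gives the same tail bound with a different absolute constant $c$.

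\emph{Net and union bound.} Take a $1/4$-net $\mathcal N$ of the unit sphere of $\mathcal S$ with $|\mathcal N|\le 9^d$. A union bound with $t=\eta/2$ yields
\[
\Pr\bigl(\exists z\in\mathcal N:\ |\norm{Dz}^2-1|>\eta/2\bigr)\le 2\cdot 9^d e^{-cm\eta^2/4}.
\]
This is at most $\delta$ once $m\ge C\eta^{-2}(d+\log(1/\delta))$, which is \eqref{eq:samplecomplexity}.

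\emph{Net to sphere.} Put $A=U^\top D^\top DU-I$, a symmetric $d\times d$ matrix, so the quantity to control is
\[
\norm{A}=\sup_{\norm{y}=1}|y^\top Ay|.
\]
For any unit $y$ choose $y_0$ in the net (pulled back to $\R^d$) with $\norm{y-y_0}\le1/4$. Then
\[
|y^\top Ay|\le|y_0^\top Ay_0|+2\norm{A}\norm{y-y_0}+\norm{A}\norm{y-y_0}^2\le \eta/2+\tfrac{9}{16}\norm{A}.
\]
Taking the supremum over $y$ gives $\norm{A}\le\tfrac{16}{7}\cdot\tfrac{\eta}{2}\cdot$, which is a constant multiple of $\eta$. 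Running the argument with $t=c'\eta$ for a small enough absolute $c'$ makes this constant at most one, and only changes $C$. This completes the proof.

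\emph{Main obstacle.} The only delicate step is the spherical case, because the coordinates of a uniform unit vector are not independent. What makes it work is the standard fact that $\sqrt{n}\,u$ is an isotropic sub-Gaussian vector with sub-Gaussian norm bounded by an absolute constant. I would cite this, for instance from Vershynin's text, rather than reprove it; with it, the same Bernstein bound and net argument apply verbatim.
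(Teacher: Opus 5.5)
Your argument is correct, but it takes a different route from the paper's sketch. The paper makes the same reduction to $DU$ with $U$ orthonormal and then works with extreme singular values. In the Gaussian case it uses rotation invariance ($DU$ has i.i.d.\ $N(0,1/m)$ entries) and quotes the standard Gaussian estimates $\sqrt m-\sqrt d-t\le s_{\min}(\sqrt m\,DU)\le s_{\max}(\sqrt m\,DU)\le\sqrt m+\sqrt d+t$, valid with probability at least $1-2e^{-t^2/2}$. This gives the claim directly on $\norm{Dz}$ with an explicit near-optimal budget $m\ge\eta^{-2}\bigl(\sqrt d+\sqrt{2\log(2/\delta)}\bigr)^2$. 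The spherical case is only asserted, by citation, as an ``isotropic bounded-row'' embedding.

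You instead treat both distributions with one elementary template: a fixed-vector $\chi^2$ or Bernstein tail, a $9^d$-point net, a union bound, and the quadratic-form net-to-sphere step (where e.g.\ $t=7\eta/16$ gives $\norm{A}\le\eta$). You pay with unoptimized constants, but you need neither Gordon-type comparison inequalities nor exact Gaussian rotation invariance.

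For the spherical rows your route is actually the more informative one. The almost-sure row bound there is only $\norm{\sqrt n\,U^\top u_j}^2\le n$, so a bounded-row (matrix Chernoff or Rudelson-type) estimate taken literally would give a budget of order $\eta^{-2}n\log(d/\delta)$. It is exactly the sub-Gaussianity of $\sqrt n\,u_j$ that you invoke which yields the $d+\log(1/\delta)$ scaling stated in the lemma. The paper later relies on that scaling to say that $m_k$ is governed by the active-span dimension $d_k$.
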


\begin{proof}[Proof sketch]
Let \(U\in\R^{n\times d}\) have orthonormal columns spanning
\(\mathcal S\).  For \(z=Uy\), the desired estimate is equivalent to
all singular values of \(DU\) lying in \([1-\eta,1+\eta]\).  In the Gaussian
case, \(DU\) has the distribution of an \(m\times d\) Gaussian matrix with
entries of variance \(1/m\), so standard smallest- and largest-singular-value
concentration gives \eqref{eq:samplecomplexity}.  The scaled spherical case
is the corresponding isotropic bounded-row subspace embedding.  These are
standard oblivious subspace-embedding estimates; see
\cite{sarlos2006improved,halko2011finding,woodruff2014sketching}.
\qed
\end{proof}

Applying Lemma \ref{lem:randomembed} conditionally on the history justifies
Assumption \ref{ass:embed}: at the moment \(D_k\) is drawn, \(\mathcal S_k\)
is fixed, and the failure probability is at most \(\delta_k\).  Since
\(\sum_k\delta_k<\infty\), the embedding failures occur only finitely often
almost surely by the Borel--Cantelli lemma, which states that events with
summable probabilities occur only finitely many times almost surely.  Moreover
\(d_k:=\dim(\mathcal{S}_k)\le \min\{n,|\cA_k|+1\}\).  Thus a sufficient
per-iteration budget is
\begin{equation}\label{eq:mkbudget}
        m_k \ge C\eta^{-2}
        \left(d_k+\log\frac{1}{\delta_k}\right).
\end{equation}
The worst-case scaling is therefore linear in the ambient dimension \(n\),
not exponential in the number of possible active subsets; when the active
gradients are low rank, the required number of sampled directions is governed
by the active-span dimension \(d_k\).  For a finite horizon \(K\), choosing
\(\delta_k=\delta/K\) gives the uniform high-probability budget
\(m_k=O(\eta^{-2}(d_k+\log(K/\delta)))\).  For an infinite run, a summable
choice such as \(\delta_k=\delta/(k+1)^2\) preserves almost-sure convergence
and adds only a logarithmic dependence on \(k\).

\subsection{Almost-sure directional stationarity}
\label{sec:asstationarity}

\begin{assumption}\label{ass:numactive}
For every convergent subsequence \(x^k\to\bar x\) and every
\(i\in\cA(\bar x)\), there is a further subsequence, not relabeled, such
that \(i\in\cA_k\) for all sufficiently large \(k\).  Equivalently, no
index active at an accumulation point disappears permanently from the
numerical active sets along a sequence converging to that point.
\end{assumption}

\begin{remark}[On numerical active-set consistency]\label{rem:numactive}
Assumption \ref{ass:numactive} is the only place where the analysis uses more
than the exact active set at \(x^k\).  Its role is to rule out a numerical
active-set artifact: an index can be active at a limiting tie \(\bar x\) even
if it is inactive along a one-sided sequence converging to \(\bar x\).  Thus
using exact active sets alone does not automatically imply the assumption.
For any fixed positive active-set tolerance, however, the property holds on a
convergent tail by continuity: if \(i\in\cA(\bar x)\) and \(x^k\to\bar x\),
then \(h(x^k)-\psi_i(x^k)\to0\).  This is the finite-horizon interpretation
used in the numerical experiments.  In the asymptotic theorem, where
\(\sum_k\eps_k<\infty\), the assumption records the corresponding requirement
on the tolerance schedule: every piece active at an accumulation point must be
included infinitely often, or equivalently its active gap must be no larger
than \(\eps_k\) along a further convergent subsequence.  A slower or adaptive
active-set tolerance is the practical way to enforce this condition when
one-sided ties are expected.
\end{remark}

\begin{theorem}[Directional stationarity]\label{thm:dstationarity}
Suppose Assumptions \ref{ass:basic}, \ref{ass:embed}, and
\ref{ass:numactive} hold, and let \(\tau_k\downarrow0\).  If RA-DCA uses the
\(\eps_k\)-active sets \(\cA_k\) and the vertex safeguard
\eqref{eq:sampledres}--\eqref{eq:vertexchoice}, then every accumulation
point of $\{x^k\}$ is directional-stationary with probability one.
\end{theorem}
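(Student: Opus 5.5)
By Lemma~\ref{lem:dres}, it suffices to show that every accumulation point \(\bar x\) satisfies \(R_{\rm d}(\bar x)=0\), that is, \(\nabla g(\bar x)=\nabla\psi_i(\bar x)\) for every \(i\in\cA(\bar x)\). I would work on a probability-one event \(\Omega_0\). By Assumption~\ref{ass:embed} and Borel--Cantelli, on \(\Omega_0\) the embedding \eqref{eq:embedding} holds for all \(k\ge K_0(\omega)\).

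\emph{Deterministic consequences.} Both branches produce \(v^k\in\conv\{\nabla\psi_i(x^k):i\in\cA_k\}\): a single active vertex in the safeguard branch, and \(G_k\alpha^k\) in the LP branch. So Lemma~\ref{lem:descent} applies to every iterate. This gives boundedness of the iterates in the compact enlarged level set and \(\norm{x^{k+1}-x^k}\to0\).

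\emph{Step 1: a limiting step equation.} Fix a subsequence \(x^k\to\bar x\), \(k\in K\). The optimality condition of \eqref{eq:update} is
\[
v^k=\nabla g(x^{k+1})+\sigma(x^{k+1}-x^k).
\]
Since \(\nabla g\) is \(L_g\)-Lipschitz and the steps vanish, \(v^k\to\nabla g(\bar x)\) along \(K\), and hence \(\norm{v^k-g_k}\to0\).

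\emph{Step 2: the safeguard branch.} Consider the iterations \(k\in K\), \(k\ge K_0\), at which the safeguard fires, so \(v^k=\nabla\psi_{i_k}(x^k)\) with \(i_k\) maximizing the sketched residual. By \eqref{eq:embedding} applied to \(z=\nabla\psi_i(x^k)-g_k\in\mathcal S_k\), for every \(i\in\cA_k\),
\[
(1-\eta)\norm{\nabla\psi_i(x^k)-g_k}\le\widehat R_k=\norm{D_k(v^k-g_k)}\le(1+\eta)\norm{v^k-g_k}.
\]
Therefore
\[
\max_{i\in\cA_k}\norm{\nabla\psi_i(x^k)-g_k}\le\frac{1+\eta}{1-\eta}\norm{v^k-g_k}\to0.
\]

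\emph{Step 3: the LP branch.} In the LP branch, \(\widehat R_k\le\tau_k\), and the embedding gives
\[
\max_{i\in\cA_k}\norm{\nabla\psi_i(x^k)-g_k}\le\frac{\tau_k}{1-\eta}\to0.
\]

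\emph{Step 4: combining the branches.} Every \(k\in K\) with \(k\ge K_0\) falls into one of the two branches, so along \(K\) we get
\[
\rho_k:=\max_{i\in\cA_k}\norm{\nabla\psi_i(x^k)-g_k}\to0.
\]

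\emph{Step 5: passing to the limit.} Fix \(i\in\cA(\bar x)\). By Assumption~\ref{ass:numactive} there is a further subsequence along which \(i\in\cA_k\) eventually. Along it,
\[
\norm{\nabla\psi_i(x^k)-\nabla g(x^k)}\le\rho_k\to0.
\]
Continuity of \(\nabla\psi_i\) and \(\nabla g\) then gives \(\nabla\psi_i(\bar x)=\nabla g(\bar x)\). Since \(i\in\cA(\bar x)\) was arbitrary, \(R_{\rm d}(\bar x)=0\), and \(\bar x\) is directional-stationary by Lemma~\ref{lem:dres}.

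The key subtlety is Step 2. The safeguard does not directly make the residual small; instead, the chosen vertex equals \(v^k\), which Step 1 forces close to \(\nabla g(x^{k+1})\approx g_k\). The embedding then transfers the smallness of this maximal sketched vertex to all active vertices, including the unsketched full-space residuals. Measurability is not an issue, because \(\Omega_0\) is defined only through the finitely many embedding failures and the rest of the argument is deterministic on it. The statement "every accumulation point" holds simultaneously on \(\Omega_0\), since the argument applies to every subsequence.
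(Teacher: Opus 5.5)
Your proof is correct and uses the same ingredients as the paper's proof: the chain \((1-\eta)\norm{\nabla\psi_i(x^k)-g_k}\le\widehat R_k\le(1+\eta)\norm{v^k-g_k}\le(1+\eta)(L_g+\sigma)\norm{x^{k+1}-x^k}\) on safeguard steps, \(\tau_k\to0\), vanishing steps from Lemma~\ref{lem:descent}, and Assumption~\ref{ass:numactive} to pass to the limit. The difference is only organizational. You argue directly, bounding \(\max_{i\in\cA_k}\norm{\nabla\psi_i(x^k)-g_k}\) by \(\max\{\tau_k/(1-\eta),\,C_\eta\norm{x^{k+1}-x^k}\}\) in both branches; this is the estimate \eqref{eq:resstepbound} of Proposition~\ref{prop:resrate}, extended to \(\cA_k\). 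The paper instead argues by contradiction that a persistent violating index eventually forces the safeguard to fire, so it never needs to examine the LP branch.
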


\begin{proof}
By Assumption \ref{ass:embed} and the preceding Borel--Cantelli argument,
the embedding event \eqref{eq:embedding} fails only finitely many times with
probability one.  Work on this event.  Lemma \ref{lem:descent} gives
$\norm{x^{k+1}-x^k}\to0$ and compactness gives accumulation points.

Let $\bar x$ be an accumulation point and take a subsequence
$x^k\to\bar x$ along which the embedding event holds.  Suppose, for
contradiction, that $R_{\rm d}(\bar x)>0$.  Then there is an index
\(\bar i\in\cA(\bar x)\) such that
\[
        \norm{\nabla\psi_{\bar i}(\bar x)-\nabla g(\bar x)}>0 .
\]
By Assumption \ref{ass:numactive}, after passing to a further subsequence we
have \(\bar i\in\cA_k\) for all sufficiently large \(k\).
By continuity, there is \(\rho>0\) such that
\begin{equation*}
        \norm{\nabla\psi_{\bar i}(x^k)-\nabla g(x^k)}\ge \rho
        \quad\hbox{for all sufficiently large }k .
\end{equation*}
The embedding lower bound gives
\[
        \widehat R_k
        \ge
        \norm{D_k(\nabla\psi_{\bar i}(x^k)-\nabla g(x^k))}
        \ge (1-\eta)\rho .
\]
Thus, for large \(k\), the safeguard is triggered because \(\tau_k\to0\).
RA-DCA therefore uses the active vertex selected in
\eqref{eq:vertexchoice},
\[
        v^k=\nabla\psi_{i_k}(x^k),
        \qquad
        \norm{D_k(v^k-\nabla g(x^k))}=\widehat R_k .
\]
The embedding upper bound also gives
\[
        \norm{v^k-\nabla g(x^k)}
        \ge
        \frac{\widehat R_k}{1+\eta}
        \ge
        \frac{1-\eta}{1+\eta}\rho .
\]

For these \(k\), \(x^{k+1}\) is the solution of \eqref{eq:update} with this
vertex.  The first-order condition is
\begin{equation*}
        \nabla g(x^{k+1})-v^k+\sigma(x^{k+1}-x^k)=0 .
\end{equation*}
Using the Lipschitz continuity of $\nabla g$,
\begin{equation*}
        \frac{1-\eta}{1+\eta}\rho
        \le \norm{v^k-\nabla g(x^k)}
        \le (L_g+\sigma)\norm{x^{k+1}-x^k}.
\end{equation*}
Thus the step length is bounded away from zero on this subsequence, which
contradicts Lemma \ref{lem:descent}.  Therefore $R_{\rm d}(\bar x)=0$, and
Lemma \ref{lem:dres} proves that $\bar x$ is directional-stationary.
\qed
\end{proof}

Theorem \ref{thm:dstationarity} is the main convergence result.  For
nonasymptotic scale, Appendix \ref{sec:residualcomplexity} records the
corresponding worst-iterate residual bound obtained by combining the
safeguard with the descent estimate.  In particular, if the safeguard
tolerance is of order \(N^{-1/2}\) or smaller and the numerical active-set
errors are summable, the best
directional-stationarity residual over the last half of the first \(N\)
iterations is also \(O(N^{-1/2})\) on the successful embedding events.  This
is a stationarity-complexity estimate, not a linear convergence theorem;
faster rates would require additional local regularity beyond the active-set
mechanism studied here.

\begin{remark}
If the vertex safeguard is removed, the same descent proof still applies,
but the limiting condition is the weaker criticality
$\nabla g(\bar x)\in\partial h(\bar x)$.  The LP \eqref{eq:lp} should
therefore be viewed as a computational stabilization and active-set
compression device, not as a standalone certificate of
directional stationarity.  The proof of Theorem \ref{thm:dstationarity}
therefore uses the LP only to generate an admissible DCA subgradient when the
sampled vertex residual is already small; the logical force of the theorem is
the vertex safeguard.
\end{remark}

\subsection{Finite block extensions}
\label{sec:blockextension}

The single finite maximum in \eqref{eq:maxh} is the basic block in a broader
class of objectives with a finite sum of max terms,
\begin{equation}\label{eq:blockmodel}
        F_{\rm b}(x)=g(x)-\sum_{\ell=1}^p h_\ell(x),
        \qquad
        h_\ell(x)=\max_{i\in\cI_\ell}\psi_{\ell i}(x).
\end{equation}
Let
\[
        \cA_\ell(x)=\argmax_{i\in\cI_\ell}\psi_{\ell i}(x)
\]
and define the set of aggregate active vertices
\begin{equation}\label{eq:blockvertices}
        \cV(x)=
        \left\{
        \sum_{\ell=1}^p \nabla\psi_{\ell i_\ell}(x):
        i_\ell\in\cA_\ell(x),\ \ell=1,\ldots,p
        \right\}.
\end{equation}
The block directional-stationarity residual is
\begin{equation}\label{eq:blockres}
        R_{\rm b}(x)=
        \max_{v\in\cV(x)}\norm{\nabla g(x)-v}.
\end{equation}
Since the directional derivative of the concave part is the support function
of \(\cV(x)\), the same separation argument as in Lemma \ref{lem:dres} gives
\[
        x \hbox{ is directional-stationary for \eqref{eq:blockmodel}}
        \quad\Longleftrightarrow\quad
        R_{\rm b}(x)=0 .
\]
The corresponding convex criticality residual satisfies
\[
        \dist\left(\nabla g(x),\partial\sum_{\ell=1}^p h_\ell(x)\right)
        \le R_{\rm b}(x),
\]
and the inequality may again be strict.

For a block version of the method, let
\[
        \cA_{\ell k}
        =
        \{i:\ h_\ell(x^k)-\psi_{\ell i}(x^k)\le\eps_{\ell k}\}
\]
and let \(\cV_k^\eps\) be the aggregate vertices formed from these numerical
block active sets:
\[
        \cV_k^\eps
        =
        \left\{
        \sum_{\ell=1}^p \nabla\psi_{\ell i_\ell}(x^k):
        i_\ell\in\cA_{\ell k},\ \ell=1,\ldots,p
        \right\}.
\]
The embedding subspace is
\[
        \mathcal S_k^{\rm b}
        =
        \operatorname{span}\{\nabla g(x^k),v:\ v\in\cV_k^\eps\},
\]
and the sampled residual is
\begin{equation}\label{eq:blocksampledres}
        \widehat R^{\rm b}_k
        =
        \max_{v\in\cV_k^\eps}
        \norm{D_k(v-\nabla g(x^k))}.
\end{equation}

\begin{corollary}[Block extension]\label{cor:blockextension}
Consider \eqref{eq:blockmodel} and suppose the analogues of Assumptions
\ref{ass:basic} and \ref{ass:numactive} hold for the numerical block active
sets.  Suppose the block tolerances satisfy
\[
        \sum_{k=0}^\infty\sum_{\ell=1}^p\eps_{\ell k}<\infty .
\]
Assume that \(D_k\) satisfies \eqref{eq:embedding} on
\(\mathcal S_k^{\rm b}\), with summable failure probabilities.  If the block
method selects an aggregate vertex attaining \(\widehat R_k^{\rm b}\) whenever
\(\widehat R_k^{\rm b}>\tau_k\), with \(\tau_k\downarrow0\), then every
accumulation point is directional-stationary for \eqref{eq:blockmodel} with
probability one.
\end{corollary}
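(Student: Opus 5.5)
The plan is to repeat the proof of Theorem \ref{thm:dstationarity} with the single active set replaced by the aggregate vertex set. There are three steps: a descent lemma for aggregate vertices, a characterization of directional stationarity through $R_{\rm b}$, and a contradiction argument showing the step length stays bounded away from zero.

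\emph{Descent.} In every branch, the linearization vector of the block method has the form
\[
v^k=\sum_{\ell}\sum_{i\in\cA_{\ell k}}\alpha_{\ell i}\nabla\psi_{\ell i}(x^k),
\]
where each block weight vector $\alpha_\ell$ lies in a simplex. This covers a selected aggregate vertex, and it covers any blockwise convex-combination fallback as well. For each $\ell$ I would apply the convexity argument of Lemma \ref{lem:descent} separately, using $\psi_{\ell i}(x^k)\ge h_\ell(x^k)-\eps_{\ell k}$. This gives
\[
-h_\ell(x^{k+1})\le -h_\ell(x^k)+\eps_{\ell k}-\ip{v^k_\ell}{s^k}.
\]
Summing over $\ell$ and adding \eqref{eq:qdescent} yields
\[
F_{\rm b}(x^{k+1})\le F_{\rm b}(x^k)+\sum_\ell\eps_{\ell k}-\tfrac{\mu}{2}\norm{s^k}^2 .
\]
The summability hypothesis on the block tolerances then gives the enlarged compact level set, $\sum_k\norm{s^k}^2<\infty$, and $s^k\to0$.

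\emph{Stationarity characterization.} Here I would justify the claimed equivalence with $R_{\rm b}(x)=0$. Since $h_\ell'(x;d)=\max_{i\in\cA_\ell(x)}\ip{\nabla\psi_{\ell i}(x)}{d}$, summing over blocks gives
\[
\sum_\ell h_\ell'(x;d)=\max_{v\in\cV(x)}\ip{v}{d},
\]
because a maximum of a separable sum is attained by maximizing each block independently. The argument of Lemma \ref{lem:dres} then applies verbatim with $d=-(\nabla g(x)-v)$ for each $v\in\cV(x)$.

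\emph{Contradiction.} Borel--Cantelli, with the summable failure probabilities, shows that the embedding fails only finitely often almost surely, so we may work on that event. Suppose $R_{\rm b}(\bar x)>0$ at an accumulation point $\bar x$. Then there are indices $\bar i_\ell\in\cA_\ell(\bar x)$ with
\[
\norm{\textstyle\sum_\ell\nabla\psi_{\ell\bar i_\ell}(\bar x)-\nabla g(\bar x)}>0 .
\]
The main obstacle is getting all $p$ indices into the numerical active sets at the same time. The block analogue of Assumption \ref{ass:numactive} supplies each index only along a further subsequence. Since $p$ is finite, I would pass to nested subsequences $p$ times, so that $\bar i_\ell\in\cA_{\ell k}$ for all $\ell$ and all large $k$. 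The corresponding aggregate vector then belongs to $\cV_k^\eps$.

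By continuity of the gradients, its mismatch with $\nabla g(x^k)$ is at least some $\rho>0$. The lower embedding bound on $\mathcal S_k^{\rm b}$ gives $\widehat R^{\rm b}_k\ge(1-\eta)\rho>\tau_k$ for large $k$, so an aggregate vertex $v^k$ attaining $\widehat R_k^{\rm b}$ is selected. The upper embedding bound then gives
\[
\norm{v^k-\nabla g(x^k)}\ge\frac{1-\eta}{1+\eta}\rho .
\]
From the optimality condition $\nabla g(x^{k+1})-v^k+\sigma s^k=0$ and the Lipschitz continuity of $\nabla g$, we get $\norm{v^k-\nabla g(x^k)}\le(L_g+\sigma)\norm{s^k}$. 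This keeps $\norm{s^k}$ bounded away from zero along the subsequence, contradicting $s^k\to0$. Hence $R_{\rm b}(\bar x)=0$, and $\bar x$ is directional-stationary for \eqref{eq:blockmodel} with probability one.
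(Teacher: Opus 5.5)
Your proposal is correct and follows the paper's proof essentially step for step. The shared steps are the blockwise descent estimate with $\sum_\ell\eps_{\ell k}$, the Borel--Cantelli reduction, an active aggregate vertex in $\cV_k^\eps$ whose mismatch with $\nabla g(x^k)$ stays bounded below, the two embedding bounds, and the first-order condition contradicting $\norm{x^{k+1}-x^k}\to0$. Your explicit $p$-fold nested subsequence extraction and your support-function justification of the equivalence with $R_{\rm b}=0$ spell out two points the paper uses without detail.
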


\begin{proof}
If \(v^k\in\operatorname{conv}\cV_k^\eps\), the proof of Lemma
\ref{lem:descent} applies block by block and gives the same descent estimate
with \(\eps_k\) replaced by \(\sum_{\ell=1}^p\eps_{\ell k}\).  This includes
both an aggregate vertex and a convex combination of aggregate vertices from
\(\cV_k^\eps\).  Work on the
almost-sure event on which the embedding on \(\mathcal S_k^{\rm b}\) fails
only finitely many times.  If an accumulation point \(\bar x\) had
\(R_{\rm b}(\bar x)>0\), numerical active-set consistency would give an
aggregate vertex \(v^k\in\cV_k^\eps\) converging to a limiting active
aggregate vertex whose distance from \(\nabla g(\bar x)\) is positive.  Hence
its distance from \(\nabla g(x^k)\) stays bounded away from zero along a
subsequence.  The lower embedding bound would then keep
\(\widehat R_k^{\rm b}\) bounded away from zero, so the safeguard
would select such an aggregate vertex for all large \(k\).  The first-order
condition for the DCA subproblem would imply, as in Theorem
\ref{thm:dstationarity}, that \(\norm{x^{k+1}-x^k}\) is bounded away from zero
on this subsequence, contradicting Lemma \ref{lem:descent}.  Thus
\(R_{\rm b}(\bar x)=0\), which is equivalent to directional stationarity for
\eqref{eq:blockmodel}.
\qed
\end{proof}

The price of the block statement is that \(\cV_k^\eps\) may be a Cartesian
product of numerical block active sets.  Thus Corollary
\ref{cor:blockextension} is most useful as a stationarity benchmark: a
practical block implementation should be viewed according to how closely its
active-vertex search approximates the aggregate safeguard
\eqref{eq:blocksampledres}.
Appendix \ref{sec:appendixblock} gives a fixed-factor approximate version of
this statement and records the corresponding inexact subproblem conditions.
Greedy block rules without a verified approximation
factor should be interpreted computationally: they test whether the same
sketched active-set geometry is useful before the exact numerical aggregate
safeguard is affordable.

The top-\(k\), trimmed-regression, complementarity/binary, and QUBO
experiments in Sections \ref{sec:topksupport}, \ref{sec:trimmedreg},
\ref{sec:binarycomp}, and \ref{sec:qubo} use this block
viewpoint.  Their active-vertex search is implemented by a greedy full-space
or sketched rule rather than by enumerating all aggregate vertices, so those
results are reported as computational block extensions of the single-max
method.

\begin{remark}[Full-vertex scans in single and block models]
For an explicit single maximum, the full-vertex residual scan is simply
\(\max_{i\in\cA(x)}\|\nabla\psi_i(x)-\nabla g(x)\|\).  If all active
gradients are already available, this costs \(O(n|\cA(x)|)\) and is often the
most direct implementation; choosing \(D=I\) in \eqref{eq:sampledres} gives
the same full-vertex scan.  The situation changes for \eqref{eq:blockmodel}.  An
active vertex is now an aggregate
\(\sum_{\ell=1}^p\nabla\psi_{\ell i_\ell}(x)\), and exact full-vertex scanning
requires a search over
\(\cA_1(x)\times\cdots\times\cA_p(x)\), whose size is
\(\prod_{\ell=1}^p|\cA_\ell(x)|\).  This combinatorial growth is the main
difficulty for exact scans.

The block experiments therefore use a different object from the exact
safeguard in Corollary \ref{cor:blockextension}: a greedy aggregate search.
The full-space greedy reference evaluates greedy choices in the original
\(\R^n\) space; the RA block rule performs the same type of greedy search
after projecting each block gradient by \(D\).  Thus its active-set cost is
tied to forming and scanning the projected block gradients, roughly
\(O(m\sum_\ell|\cA_\ell(x)|)\) once the block gradients are available, rather
than enumerating \(\prod_\ell|\cA_\ell(x)|\) aggregate vertices.  This is why
the block rule is reported as an approximate computational extension: it can
track a full-space greedy aggregate choice, but it is not the exact aggregate
full-vertex scan unless an additional approximation guarantee such as
Appendix \ref{sec:appendixblock} is verified.
\end{remark}

\section{Implementation}
\label{sec:implementation}

The MATLAB implementation follows Algorithm \ref{alg:radca}.  For a given
active gradient matrix $G_k$ and gradient $g_k$, the products
\begin{equation*}
        A_k=D_kG_k,\qquad b_k=D_kg_k
\end{equation*}
are computed by one dense matrix multiply and one matrix-vector
multiply.  If a compatible GPU is available, the code optionally creates
$D_k$, $G_k$, and $g_k$ as \texttt{gpuArray} objects and gathers only the
small projected matrices needed by the LP solver.  The default LP backend is
\texttt{lpSolver="gurobi"}, which calls Gurobi as a continuous LP solver.  If
Gurobi is unavailable, the code falls back to \texttt{linprog} and then to a
projected-gradient solve of the sampled least-squares projection over the
simplex.  Appendix \ref{sec:lpbackendappendix} reports diagnostics for this
LP layer: Gurobi is the fastest exact backend on the test machine, while the
projected simplex method is a portable approximate fallback.  The LP is
skipped on iterations where the sampled vertex residual exceeds the safeguard
tolerance, because the active vertex is then chosen directly.
The experiment tables report CPU wall-clock times, with GPU
acceleration disabled unless a table explicitly says otherwise.  The separate
QUBO backend diagnostic in Appendix \ref{sec:quboappendix} reports a genuinely
batched GPU prototype alongside serial and batched CPU implementations.

The sketch and LP are only the active-subgradient selection layer.  Once
\(v^k\) has been chosen, the dominant cost in a full implementation is often
the convex DCA subproblem \eqref{eq:update}.  RA-DCA should therefore be paired
with a solver that exploits the structure of this subproblem: closed-form
updates when available, projected first-order methods for simple bound
constraints, sparse linear algebra for quadratic models, or an interior-point,
conic, or commercial convex-optimization solver when the model calls for it.
GPU acceleration is useful in exactly this structural sense: if the subproblem
solver is dominated by large matrix-vector products, batched projections, or
other operations with enough arithmetic intensity to amortize data transfer,
then the whole RA-DCA iteration can benefit from running those operations on a
GPU.  If the subproblem is small, sparse, or dominated by branching logic or
factorizations better handled by a CPU solver, a specialized CPU implementation
may be preferable for that case.

\section{Numerical experiments}
\label{sec:numerics}

We report reproducible MATLAB experiments for several
max-structured DC instances.  The experiments separate three regimes:
single finite maxima covered by Theorem \ref{thm:dstationarity}, block
sum-of-max models with combinatorial aggregate active sets, and harder
penalty models where starts, the DC split, or a reference mixed-integer solver
also affect the outcome.  The sequence mirrors the model classes in Section
\ref{sec:intromodels}; the QUBO benchmark is kept last because it adds a
nonconvex quadratic term, two DC decompositions, multistart initialization,
and a Gurobi MIQP reference.  The tables report either averages over the
stated random instances or sketches, or instancewise diagnostics when that
view is more informative.  Table \ref{tab:testoverview} summarizes the test
suite.

The tolerance sequences in Algorithm \ref{alg:radca} should be interpreted
asymptotically in the convergence theorem.  The reported experiments are
finite-horizon computations and use fixed small tolerances instead:
\(\eps_k=\tau_k=10^{-10}\) for the synthetic finite-max tests,
\(\eps_k=\tau_k=10^{-12}\) for the sparse support-function tests, and
tie tolerances of \(10^{-8}\) in the block penalty implementations.  These
fixed values do not themselves satisfy the infinite-horizon assumptions
\(\sum_k\eps_k<\infty\) and \(\tau_k\downarrow0\); they are finite-precision
approximations of the asymptotic schedules described after Algorithm
\ref{alg:radca}.

\begin{table}[!ht]
\centering
\caption{Overview of the numerical experiments.}
\label{tab:testoverview}
\scriptsize
\begingroup
\setlength{\tabcolsep}{3pt}
\renewcommand{\arraystretch}{1.08}
\begin{tabular}{@{}L{0.16\linewidth}L{0.27\linewidth}L{0.24\linewidth}L{0.25\linewidth}@{}}
\toprule
test & model or data & subproblem and initialization & role in the comparison\\
\midrule
Signed-pair max-affine &
Synthetic finite maxima with \(n=50,100,200,500\) and \(p=5n\) signed pairs. &
Closed-form DCA step; all runs start at \(x^0=0\). &
Tests whether sampled active-set geometry finds stronger vertices.\\
\addlinespace
Sketch-size study &
Same signed-pair max-affine model with \(n=100\), \(p=500\). &
One-step active-vertex screening from the degenerate start. &
Measures how the number \(m\) of sampled directions affects active-vertex
quality and projection cost.\\
\addlinespace
Max-quadratic pieces &
Signed-pair smooth pieces
\(\psi_i(x)=a_i^\top x+\gamma\|x\|^2/2\), \(\gamma=0.25\). &
Closed-form quadratic DCA step; start \(x^0=0\). &
Checks whether the same effect persists when active gradients change with
the iterate.\\
\addlinespace
LIBSVM support function &
Signed support-function models from sparse LIBSVM data sets. &
One-step max-affine DCA from \(x^0=0\). &
Tests screening on large sparse active-gradient matrices.\\
\addlinespace
Top-\(k\) support function &
Top-\(k\) signed support models from the same sparse LIBSVM matrices. &
One-step aggregate-vertex DCA from \(x^0=0\), using full-space and sketched
greedy selection. &
Tests a trimmed/sparse block active set where exact aggregate enumeration is
combinatorial.\\
\addlinespace
Trimmed sparse regression &
Trimmed ridge least-squares models on sparse LIBSVM matrices. &
One-step aggregate-vertex DCA from \(w^0=0\); the ridge subproblem is solved
by sparse linear algebra. &
Tests a robust-estimation top-residual model and exposes the limits of
stationarity-oriented aggregate heuristics.\\
\addlinespace
LCP/MPEC penalty &
Sparse synthetic row-stochastic LCP maps with \(n=100,300,500\). &
Closed-form box projection for a sum of two-piece max blocks. &
Tests greedy aggregate safeguards on nonsymmetric complementarity and
binary-penalty blocks.\\
\addlinespace
OR-Library QUBO &
UBQP instances \(bqp50\), \(bqp100\), and \(bqp250\), converted to minimization
QUBO form. &
Box QP solved by projected gradient with active-set polish and multistart. &
Larger benchmark with shift/spectral DC splits and a Gurobi MIQP reference.\\
\bottomrule
\end{tabular}
\endgroup
\end{table}

Unless stated otherwise, the column ``CPU s'' reports average wall-clock
seconds per run, measured by \texttt{tic}/\texttt{toc} around the solver call,
on a Windows workstation with a 12th Gen Intel Core i9-12900K processor.  GPU
acceleration was disabled for the main CPU timing tables.  The QUBO
\(bqp250.8\) backend diagnostic is the exception: it separately reports
serial CPU, batched CPU, and batched GPU timings on the same machine.  The
experiments use MATLAB R2023b syntax and require only base MATLAB for the
fallback mode, excluding the Gurobi reference solves and the optional GPU
diagnostics.  The reported RA-DCA runs set \texttt{lpSolver="gurobi"}
explicitly when the active-set LP backend is used.

\subsection{Compared active-gradient selection rules}

The single-max experiments compare the four active-gradient selection rules
defined in Section \ref{sec:algorithm}: centered DCA, random-vertex DCA,
full-vertex DCA, and RA-DCA.  Once a rule has chosen \(v^k\), all four methods
solve the same DCA subproblem \eqref{eq:update}.  Thus, ``centered DCA''
should not be read as the whole class of traditional DCA methods.  It is one
particular traditional DCA baseline with an averaged active-subgradient
selection rule.  The comparison is summarized in Table \ref{tab:methods}.

\begin{table}[!ht]
\centering
\caption{Active-gradient selection rules used in the numerical experiments.}
\label{tab:methods}
\small
\begin{tabular}{lp{0.68\linewidth}}
\toprule
method & active-gradient selection and role\\
\midrule
centered DCA &
$v^k=|\cA_k|^{-1}\sum_{i\in\cA_k}\nabla\psi_i(x^k)$; a traditional DCA
baseline using the averaged active subgradient.\\
random-vertex DCA &
uniformly samples one $\nabla\psi_i(x^k)$ with $i\in\cA_k$; a single
active-piece baseline.\\
full-vertex DCA &
chooses an active gradient maximizing
\(\norm{\nabla\psi_i(x^k)-\nabla g(x^k)}\) over \(i\in\cA_k\); an ideal
full-active-set screening baseline.\\
RA-DCA &
checks the sampled vertex residual first; if the safeguard triggers, it uses
the most violating active vertex, and otherwise solves the LP for an active
convex combination.\\
\bottomrule
\end{tabular}
\end{table}

The first baseline is useful because it can stop at critical points that are
not directionally stationary.  The second baseline tests whether simply
choosing an active vertex is enough; its
performance depends on which active piece is selected.  The full-vertex
baseline shows what is obtained when the exact vertex residual can be scanned
without sketching.  RA-DCA uses sampled active-set geometry to choose between
a stable convex combination step and a safeguarded active vertex.  On explicit
single-max instances, the full-vertex scan is a strong benchmark and may be
cheaper; RA-DCA is designed to approximate the same active-set decision
when full gradients, full-space scans, or aggregate vertices are not the
natural computational primitive.
The full-vertex baseline is included in the single-max tests, where the scan
is over the ordinary active set \(\cA_k\).  In the block sum-of-max
experiments, the corresponding exact baseline would have to scan aggregate
vertices in the Cartesian product of the block active sets.  For binary
penalties this can mean \(2^n\) sign patterns at a tied point, so we do not
use it as a routine baseline.  Instead, the same active-set idea is applied
blockwise and reported as ``RA-DCA block'' to distinguish it from the
single-max theorem and the exact aggregate-vertex safeguard in Corollary
\ref{cor:blockextension}.

The active convex-combination LP \eqref{eq:lp} is not solved on every RA-DCA
iteration.  The implementation first evaluates the sampled vertex residual;
only when that residual is below the safeguard tolerance does it solve the LP.
This ordering matters because the LP is an active-gradient centering device,
whereas the safeguard is the directional-stationarity test.  In the main
main finite-max tests, the sampled vertex residual is usually large until the
method reaches a single active piece, so the LP branch is rarely on the
critical path.  Thus the finite-max tables should not be read as evidence that
the LP itself improves the objective value; they test the vertex-first
safeguard.  Appendix \ref{sec:lpbackendappendix} verifies that the LP fallback
is inexpensive when it is forced or needed, and gives a near-active diagnostic
where the LP prevents a spurious step caused by an \(\eps_k\)-active set.

\subsection{Subproblems and initialization}

For the single finite-max tests in Sections \ref{sec:maxaffine}--\ref{sec:libsvm},
\(g(x)=\frac12\norm{x}^2\).  Hence,
after an active subgradient \(v^k\in\partial h(x^k)\) has been selected, the
DCA subproblem is
\begin{equation*}
        x^{k+1}=\argmin_x\left\{\frac12\norm{x}^2-(v^k)^\top x\right\}=v^k.
\end{equation*}
Thus the reported differences in the affine and max-quadratic tests are
entirely due to active-subgradient selection, not to numerical error in the
convex subproblem solve.  The signed-pair max-affine and max-quadratic tests
all start from \(x^0=0\), where the signed pairs are simultaneously active.

For the binary/complementarity and QUBO block tests, the feasible set is the
box \([0,1]^n\).  The binary-complementarity subproblem has a closed-form box
projection, while the
QUBO relaxation gives a bound-constrained convex quadratic program.  Because
the only constraints are simple bounds, it is solved by a local accelerated
projected-gradient routine with componentwise clipping, followed by a small
active-set polish.  Initial points are stated separately in the corresponding
subsections because they materially affect which binary-penalty blocks are
active.

\subsection{Degenerate max-affine instances}
\label{sec:maxaffine}

We also test
\begin{equation}\label{eq:maxaffineexp}
        F(x)=\frac12\norm{x}^2-\max_{1\le i\le 2p} a_i^\top x,
\end{equation}
where the rows occur in signed pairs
$a_{p+i}=-a_i$.  Thus \(p\) is the number of signed pairs, and the maximum
contains \(2p\) affine pieces.  Starting from $x^0=0$, all pieces are active.
A centered active-subgradient DCA step again stalls, while RA-DCA uses the
projected vertex residual to select a large active gradient.

\begin{table}[!ht]
\centering
\caption{Results for signed-pair max-affine instances.  The reported
values are means over ten random instances for each dimension.  Here \(p\)
denotes the number of signed pairs, so the maximum has \(2p\) affine pieces.
CPU s is the mean CPU wall-clock time per run.}
\label{tab:maxaffine}
\scriptsize
\begin{tabular}{rrlrrrr}
\toprule
$n$ & $p$ & method & objective & $R_{\rm d}$ & iterations & CPU s\\
\midrule
50 & 250 & centered DCA & -0.0000 & 1.99e+00 & 20.0 & 0.00785\\
50 & 250 & random-vertex DCA & -1.1887 & 0.00e+00 & 2.3 & 0.00117\\
50 & 250 & full-vertex DCA & -1.9750 & 0.00e+00 & 2.0 & 0.00137\\
50 & 250 & RA-DCA & -1.8246 & 0.00e+00 & 2.0 & 0.00347\\
100 & 500 & centered DCA & -0.0000 & 2.00e+00 & 20.0 & 0.0115\\
100 & 500 & random-vertex DCA & -0.8226 & 0.00e+00 & 2.0 & 0.00102\\
100 & 500 & full-vertex DCA & -1.9946 & 0.00e+00 & 2.0 & 0.000881\\
100 & 500 & RA-DCA & -1.8635 & 0.00e+00 & 2.0 & 0.00175\\
200 & 1000 & centered DCA & -0.0000 & 2.00e+00 & 20.0 & 0.0855\\
200 & 1000 & random-vertex DCA & -0.9551 & 0.00e+00 & 2.0 & 0.00489\\
200 & 1000 & full-vertex DCA & -1.9983 & 0.00e+00 & 2.0 & 0.00555\\
200 & 1000 & RA-DCA & -1.8924 & 0.00e+00 & 2.0 & 0.00777\\
500 & 2500 & centered DCA & -0.0000 & 2.00e+00 & 20.0 & 0.467\\
500 & 2500 & random-vertex DCA & -0.7640 & 0.00e+00 & 2.0 & 0.0246\\
500 & 2500 & full-vertex DCA & -1.9987 & 0.00e+00 & 2.0 & 0.0274\\
500 & 2500 & RA-DCA & -1.9383 & 0.00e+00 & 2.0 & 0.0288\\
\bottomrule
\end{tabular}

\end{table}

Table \ref{tab:maxaffine} shows the targeted behaviour.  The centered DCA
baseline certifies only convex-combination criticality at the origin and
retains a residual close to \(2\).  The random-vertex baseline leaves the
origin and reaches \(R_{\rm d}=0\), but its objective values are larger because
the selected active piece is uniformly random.  The full-vertex baseline gives
the best objectives by scanning the complete active set and selecting the
largest exact vertex residual.  RA-DCA also reaches directional stationarity
in about two iterations and consistently lies between random-vertex DCA and
this full-active-set baseline, showing that the sketch selects substantially
stronger vertices than random sampling.  Because the safeguard is triggered
at the degenerate start, RA-DCA skips the LP on the expensive first
iteration; the largest tested max-affine case takes about \(27\) milliseconds
per run.
Figure \ref{fig:convergence} shows the same behaviour on one representative
instance: the centered baseline remains flat at the initial critical point,
whereas the vertex-based methods move immediately.

\begin{figure}[!ht]
\centering
\includegraphics[width=.78\linewidth]{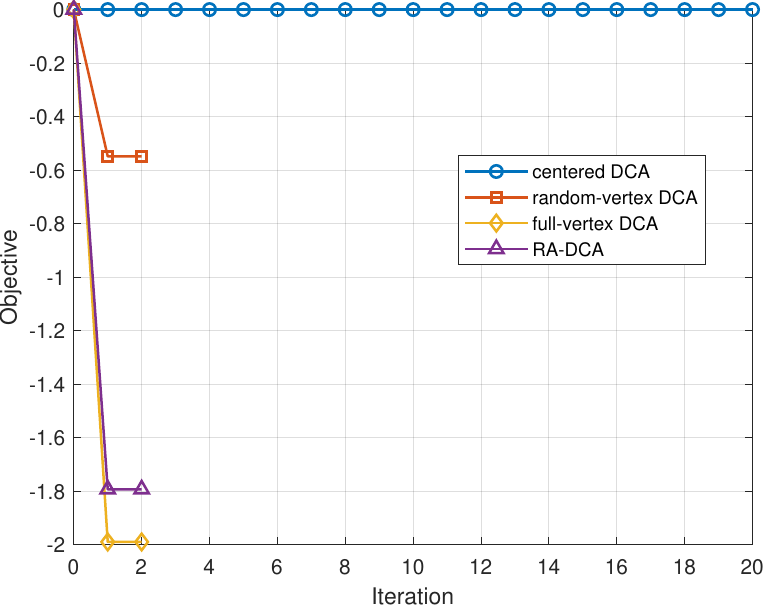}
\caption{Objective histories on one signed-pair max-affine instance with
$n=100$ and $p=500$.  Centered DCA remains at the averaged critical point,
whereas the vertex-based methods leave the origin; full-vertex DCA uses the
exact active-set scan, while RA-DCA uses the sampled residual.}
\label{fig:convergence}
\end{figure}

\subsection{Effect of the number of sampled directions}

For the signed-pair max-affine instances, the first RA-DCA step from the
origin selects an active gradient by comparing sketched norms
$\norm{D_ka_i}$.  The ideal active vertex is the one with largest true norm.
Table \ref{tab:sketch} reports, for $n=100$ and $p=500$, the ratio between
the true norm of the selected vertex and the best true norm, together with
the fraction of trials in which this ratio is at least $0.95$.  Even a small
number of directions gives a useful active-set summary.  The projection time
is the average CPU time to form \(D_kG_k\).

\begin{table}[!ht]
\centering
\caption{Effect of the number of sampled directions on active-vertex
selection for signed-pair max-affine instances with $n=100$ and $p=500$.}
\label{tab:sketch}
\small
\begin{tabular}{rrrr}
\toprule
$m$ & mean ratio & success $\ge 0.95$ & projection ms\\
\midrule
5 & 0.952 & 0.52 & 0.079\\
10 & 0.955 & 0.58 & 0.122\\
20 & 0.960 & 0.72 & 0.101\\
40 & 0.967 & 0.78 & 0.178\\
80 & 0.981 & 0.88 & 0.263\\
160 & 0.977 & 0.94 & 1.057\\
\bottomrule
\end{tabular}

\end{table}

\begin{figure}[t]
\centering
\includegraphics[width=.78\linewidth]{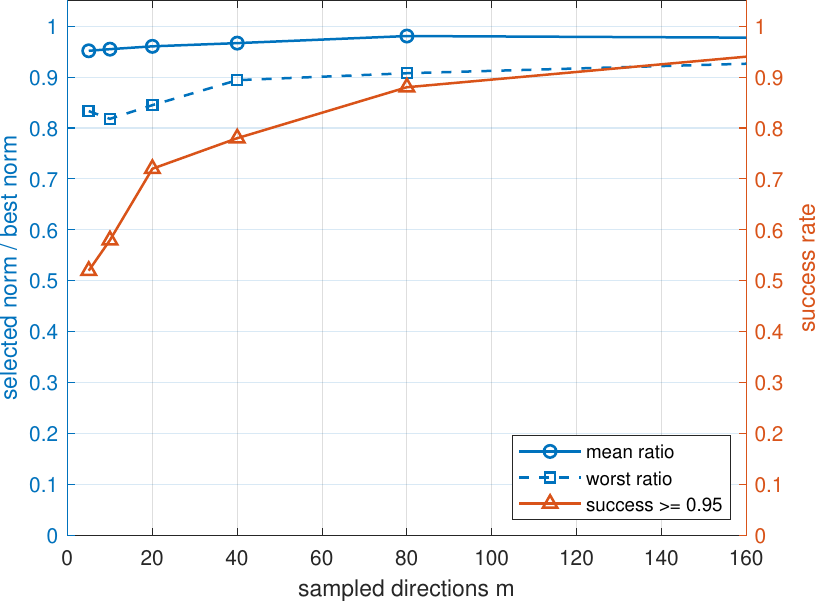}
\caption{Sampling effect on the selected active vertex.  The ratio compares
the true norm of the sketched winner with the best true active-gradient norm.}
\label{fig:sketch}
\end{figure}

The results in Table \ref{tab:sketch} and Figure \ref{fig:sketch} quantify the
computational role of the sketch.  With only \(m=5\) sampled directions, the
chosen vertex already has about \(95\%\) of the best active-gradient norm on
average, but the least favorable trial can be substantially smaller and only about half
of the trials exceed the \(0.95\) threshold.  Increasing \(m\) improves both
the average ratio and the reliability of the selected vertex, reaching a
\(0.94\) success rate at \(m=160\).  The projection remains inexpensive for
these instance sizes, supporting the use of \(D_kG_k\) as the dominant
active-set screening operation.

\subsection{Max-quadratic pieces}
\label{sec:maxquad}

The smooth-piece test uses convex components
\begin{equation*}
        \psi_i(x)=a_i^\top x+\frac{\gamma}{2}\norm{x}^2,\qquad
        \gamma=0.25,
\end{equation*}
with the same signed-pair construction.  The full objective is
\begin{equation*}
        F(x)=\frac12\norm{x}^2-\max_i
        \left(a_i^\top x+\frac{\gamma}{2}\norm{x}^2\right).
\end{equation*}
This keeps the subproblem simple: after an active piece is selected, the DCA
update is \(x^{k+1}=a_i+\gamma x^k\), or the corresponding active convex
combination.  The active gradients now change with the iterate, unlike the
max-affine test.  All runs start from \(x^0=0\).

\begin{table}[!ht]
\centering
\caption{Results for signed-pair max-quadratic instances with $\gamma=0.25$.
Values are means over ten random instances for each dimension.  CPU s reports
per-run time.}
\label{tab:maxquadratic}
\scriptsize
\begin{tabular}{rrlrrrr}
\toprule
$n$ & $p$ & method & objective & $R_{\rm d}$ & iterations & CPU s\\
\midrule
50 & 250 & centered DCA & -0.0000 & 1.99e+00 & 60.0 & 0.0251\\
50 & 250 & random-vertex DCA & -1.4438 & 1.47e-11 & 18.3 & 0.00357\\
50 & 250 & full-vertex DCA & -2.6333 & 7.23e-12 & 19.0 & 0.00413\\
50 & 250 & RA-DCA & -2.4555 & 6.98e-12 & 19.0 & 0.0044\\
100 & 500 & centered DCA & -0.0000 & 2.00e+00 & 60.0 & 0.0386\\
100 & 500 & random-vertex DCA & -1.0063 & 1.72e-11 & 18.0 & 0.00348\\
100 & 500 & full-vertex DCA & -2.6594 & 7.27e-12 & 19.0 & 0.00416\\
100 & 500 & RA-DCA & -2.5012 & 7.04e-12 & 19.0 & 0.0044\\
200 & 1000 & centered DCA & -0.0000 & 2.00e+00 & 60.0 & 0.252\\
200 & 1000 & random-vertex DCA & -1.0372 & 1.25e-11 & 18.2 & 0.00781\\
200 & 1000 & full-vertex DCA & -2.6644 & 7.27e-12 & 19.0 & 0.00973\\
200 & 1000 & RA-DCA & -2.5752 & 7.15e-12 & 19.0 & 0.0115\\
500 & 2500 & centered DCA & -0.0000 & 2.00e+00 & 60.0 & 1.41\\
500 & 2500 & random-vertex DCA & -1.0891 & 1.37e-11 & 18.2 & 0.0407\\
500 & 2500 & full-vertex DCA & -2.6649 & 7.27e-12 & 19.0 & 0.0446\\
500 & 2500 & RA-DCA & -2.6241 & 7.22e-12 & 19.0 & 0.0453\\
\bottomrule
\end{tabular}

\end{table}

\begin{figure}[t]
\centering
\includegraphics[width=\linewidth]{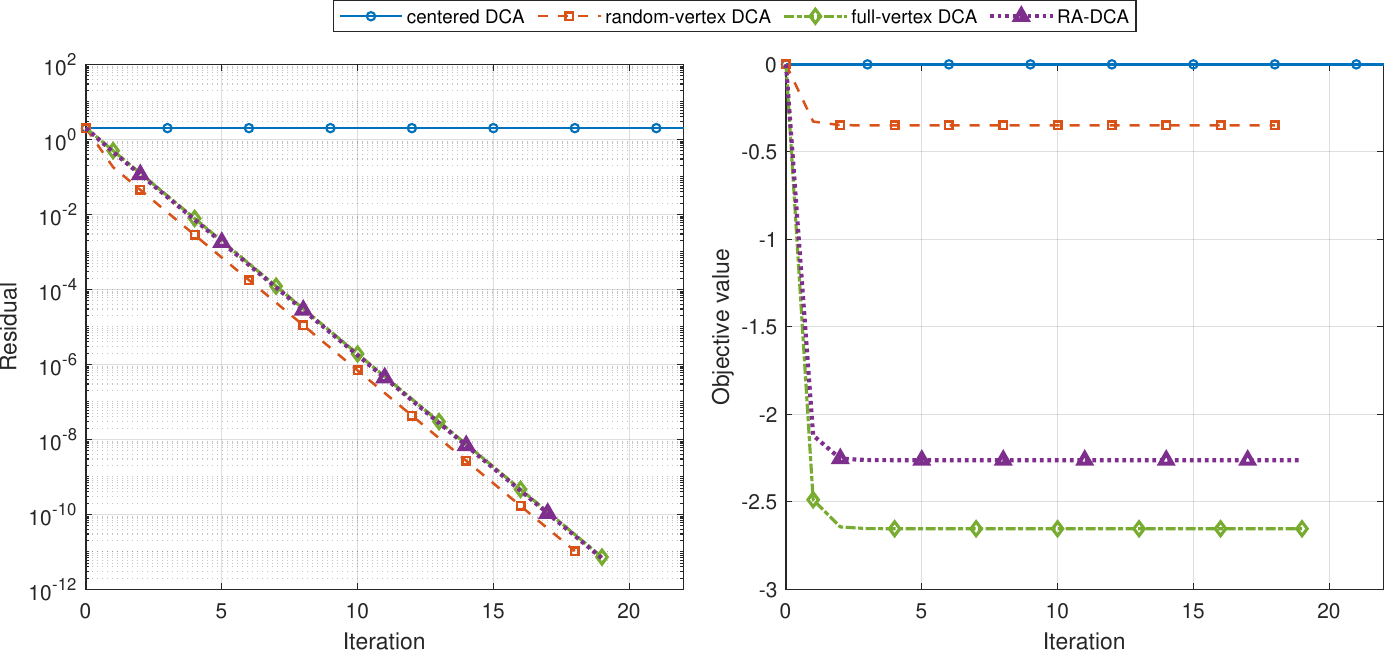}
\caption{Residual and objective histories on one max-quadratic instance with
$n=100$ and $p=500$.  Random-vertex DCA reduces the residual fastest on this
instance, but it converges to a larger objective value; RA-DCA and full-vertex
DCA reach lower stationary values.}
\label{fig:maxquadratic}
\end{figure}

Table \ref{tab:maxquadratic} shows that the same active-set effect persists
when the pieces are smooth nonlinear convex functions.  Centered DCA again
stalls at the origin with residual nearly \(2\).  The vertex-based methods
all converge to residuals on the order of \(10^{-11}\), but the objective
values separate the stationary points.  Random-vertex DCA reaches stationarity
quickly, yet its mean objective is much larger; for example, at \(n=100\) it
ends at \(-1.0063\), compared with \(-2.5012\) for RA-DCA and \(-2.6594\) for
the full-vertex reference.  The separation remains at \(n=500\): RA-DCA
reaches \(-2.6241\), close to the full-vertex value \(-2.6649\), while the
random-vertex mean is only \(-1.0891\).  Thus residual decrease alone is not a
reliable measure of active-set quality.  Figure \ref{fig:maxquadratic} shows this on
one representative instance: the random-vertex residual decreases slightly
faster, but the objective panel shows that it settles at a substantially higher
stationary value.  RA-DCA follows the full-vertex objective much more closely,
which is the targeted effect of the sampled active-set screen.

The CPU times show that the vertex-first safeguard is important in practice:
RA-DCA is now close to the full-vertex scan on these explicit synthetic active
sets, while still using the same sampled residual rule that applies when a
full scan is less attractive.

\subsection{LIBSVM support-function benchmark}
\label{sec:libsvm}

The preceding synthetic tests make the active-set geometry transparent.  To
check the same mechanism on larger sparse active-gradient matrices, we use
three binary classification data sets from the LIBSVM collection
\cite{chang2011libsvm}.  These same matrices are also used in the top-\(k\)
support and trimmed-regression tests in Sections \ref{sec:topksupport} and
\ref{sec:trimmedreg}.  Throughout these three tests,
\(N\) denotes the number of samples and \(n\) denotes the number of features,
which is also the number of optimization variables in \(w\in\R^n\).  After
removing zero rows, the data sizes are
\[
        \begin{array}{c|ccc}
        \hbox{data set} & \texttt{a8a} & \texttt{phishing} & \texttt{w8a}\\
        \hline
        N & 22696 & 11055 & 45546\\
        n & 123 & 68 & 300
        \end{array}
\]
If \(a_i^\top\) denotes one sparse data row, we form the signed
support-function model
\begin{equation}\label{eq:libsvmsupport}
        F(w)=\frac12\norm{w}^2
        -\max_{1\le i\le N}\{a_i^\top w,\,-a_i^\top w\}.
\end{equation}
This is still an exact instance of \eqref{eq:maxh}.  Starting from \(w^0=0\),
all \(2N\) signed pieces are active, and the ideal full-vertex rule selects a
data row with largest Euclidean norm.  The RA-DCA runs use the finite-horizon
direction budget \eqref{eq:mkbudget} with \(d=n\), \(C=1\), \(\eta=0.8\), and
\(\delta=0.05\).  The column ``norm ratio'' reports
\(\norm{w^1}/\max_i\norm{a_i}\), and the hit rate is the fraction of repeated
runs reaching the full-vertex objective.
This benchmark measures selection quality rather than speed against the
full-vertex rule.  In the support-function model, the exact full-vertex score
at \(w^0=0\) reduces to a closed-form sparse row-norm scan.  The row labeled
``full-vertex oracle'' reports this shortcut, not a generic active-set
screening algorithm.  RA-DCA, by contrast, keeps the same generic sampled
vertex residual used in the other finite-max experiments, with all active
signed rows in the candidate set; since the residual is large at the origin,
the LP is skipped and the most violating sketched vertex is used directly.
The test checks whether the randomized screen can recover a near-best active
row from a large active set.

\begin{table}[!ht]
\centering
\caption{LIBSVM support-function benchmark \eqref{eq:libsvmsupport}.  The
columns ``samples'' and ``features'' report \(N\) and the variable dimension
\(n\), respectively.  The full-vertex oracle uses the closed-form row-norm
scan available only for this support-function model.  The random-vertex row
averages ten random choices; the RA-DCA row uses one embedding-budget sketch.
CPU s excludes data loading.}
\label{tab:libsvmsupport}
\scriptsize
\resizebox{\linewidth}{!}{\begin{tabular}{lrrlrrrrr}
\toprule
data & samples & features & method & dirs & objective & norm ratio & hit rate & CPU s\\
\midrule
a8a & 22696 & 123 & centered DCA & -- & 0 & 0.000 & 0.00 & 0\\
a8a & 22696 & 123 & random-vertex DCA & -- & -6.9 & 0.993 & 0.90 & 0.000793\\
a8a & 22696 & 123 & full-vertex oracle & -- & -7 & 1.000 & 1.00 & 0.00259\\
a8a & 22696 & 123 & RA-DCA & 198 & -7 & 1.000 & 1.00 & 0.0714\\
\midrule
phishing & 11055 & 68 & centered DCA & -- & 0 & 0.000 & 0.00 & 0\\
phishing & 11055 & 68 & random-vertex DCA & -- & -0.5 & 1.000 & 1.00 & 0.000196\\
phishing & 11055 & 68 & full-vertex oracle & -- & -0.5 & 1.000 & 1.00 & 0.00209\\
phishing & 11055 & 68 & RA-DCA & 113 & -0.5 & 1.000 & 1.00 & 0.0276\\
\midrule
w8a & 45546 & 300 & centered DCA & -- & 0 & 0.000 & 0.00 & 0\\
w8a & 45546 & 300 & random-vertex DCA & -- & -4.25 & 0.248 & 0.00 & 0.000659\\
w8a & 45546 & 300 & full-vertex oracle & -- & -57 & 1.000 & 1.00 & 0.00456\\
w8a & 45546 & 300 & RA-DCA & 475 & -56.5 & 0.996 & 0.00 & 0.229\\
\bottomrule
\end{tabular}

}
\end{table}

Table \ref{tab:libsvmsupport} gives a more computational stress-oriented view
of the active-set rule.  The centered baseline again accepts the origin even
though the vertex residual is nonzero.  On \(a8a\) and \texttt{phishing}, row
norms are nearly uniform, so random vertices already perform well.  The
\(w8a\) instance is more discriminating: random vertices achieve only about
one quarter of the best active-gradient norm on average, whereas RA-DCA
selects a vertex with norm ratio \(0.996\).  This improvement is not free:
the sketched products use \(475\) sampled directions and all \(91{,}092\)
active signed rows, so RA-DCA remains slower than the closed-form oracle
scan.  Solving the convex-combination LP before applying the safeguard would
be wasteful in this case; the vertex-first implementation skips that LP and
takes about \(0.23\) seconds on \(w8a\).  The oracle scan is much cheaper
because this pure support-function example
exposes the exact vertex scores as row norms; if this structure is available,
that specialized scan is the appropriate implementation.  Its role here is as
an upper-quality reference.  In less special max-structured models, where
active gradients change with \(x\) or the exact screening score is expensive,
the same table indicates the quality that the randomized screen can recover
from a large active set.

\subsection{Top-\(k\) support-function benchmark}
\label{sec:topksupport}

The preceding support-function test has a special closed-form full-vertex
scan.  A top-\(k\) support model gives a sparse/trimmed block active set:
\begin{equation}\label{eq:topksupport}
        F_k(w)=\frac12\norm{w}^2
        -\sum_{j=1}^k |(Xw)|_{(j)},
\end{equation}
where \(|(Xw)|_{(1)}\ge\cdots\ge |(Xw)|_{(N)}\) are the ordered absolute
responses of the sparse LIBSVM matrix \(X\).  Equivalently,
\[
        \sum_{j=1}^k |(Xw)|_{(j)}
        =
        \max_{\substack{S\subseteq\{1,\ldots,N\}\\ |S|=k}}
        \max_{s_i\in\{-1,1\}}
        \sum_{i\in S}s_i a_i^\top w .
\]
Thus \(w^0=0\) has a combinatorial aggregate active set.  Exact enumeration
would require all signed \(k\)-subsets, so the table compares a random
aggregate vertex, a greedy full-space aggregate rule, and the same greedy rule
using only the sketched norms.  The setting \(k=50\) is used for all three
data sets, and the sketched rule uses the finite-horizon budget
\eqref{eq:mkbudget} with \(d=n\), \(C=1\), \(\eta=0.8\), and \(\delta=0.05\).

\begin{table}[!ht]
\centering
\caption{Top-\(k\) support-function benchmark \eqref{eq:topksupport} with
\(k=50\).  Here ``samples'' is \(N\) and ``features'' is the variable
dimension \(n\).  The norm ratio is relative to the greedy full-space
aggregate reference, not to the unavailable exact enumeration of all signed
\(k\)-subsets.}
\label{tab:topksupport}
\scriptsize
\resizebox{\linewidth}{!}{\begin{tabular}{lrrrlrrrr}
\toprule
data & samples & features & $k$ & method & dirs & objective & norm ratio & CPU s\\
\midrule
a8a & 22696 & 123 & 50 & centered DCA & -- & 0 & 0.000 & 0.00334\\
a8a & 22696 & 123 & 50 & random aggregate & -- & -3.23e+03 & 0.156 & 0.00194\\
a8a & 22696 & 123 & 50 & full-space greedy & -- & -1.47e+04 & 1.000 & 0.016\\
a8a & 22696 & 123 & 50 & RA sketched greedy & 203 & -1.49e+04 & 1.001 & 0.0923\\
\midrule
phishing & 11055 & 68 & 50 & centered DCA & -- & 0 & 0.000 & 0.000357\\
phishing & 11055 & 68 & 50 & random aggregate & -- & -234 & 0.139 & 0.000325\\
phishing & 11055 & 68 & 50 & full-space greedy & -- & -1.1e+03 & 1.000 & 0.0106\\
phishing & 11055 & 68 & 50 & RA sketched greedy & 118 & -1.13e+03 & 1.031 & 0.0211\\
\midrule
w8a & 45546 & 300 & 50 & centered DCA & -- & 0 & 0.000 & 0.000741\\
w8a & 45546 & 300 & 50 & random aggregate & -- & -2.59e+03 & 0.052 & 0.000605\\
w8a & 45546 & 300 & 50 & full-space greedy & -- & -9.11e+04 & 1.000 & 0.0399\\
w8a & 45546 & 300 & 50 & RA sketched greedy & 480 & -9.09e+04 & 0.996 & 0.344\\
\bottomrule
\end{tabular}

}
\end{table}

Table \ref{tab:topksupport} is more representative of the block difficulty
than Table \ref{tab:libsvmsupport}: the active set is no longer a list of
single rows.  Random aggregate vertices are much weaker, especially on
\texttt{w8a}, where their norm ratio is only \(0.052\).  The RA sketched
greedy rule recovers essentially the same aggregate quality as the full-space
greedy reference, with norm ratios \(1.001\), \(1.031\), and \(0.996\) on the
three data sets.  The ratio can exceed one because both rows are greedy
heuristics for a combinatorial aggregate search; the point is that the sketch
tracks the full-space greedy choice closely while avoiding explicit
enumeration of all signed \(k\)-subsets.

\subsection{Trimmed sparse-regression benchmark}
\label{sec:trimmedreg}

The same sparse matrices also define a robust-estimation test.  Least trimmed
squares and sparse least trimmed squares are classical tools for regression
with outliers \cite{alfons2013sparse}.  With binary labels
\(y_i\in\{-1,1\}\), consider
\begin{equation}\label{eq:trimmedreg}
        F_{\rm tr}(w)
        =
        \frac{1}{2N}\norm{Xw-y}^2
        +\frac{\lambda}{2}\norm{w}^2
        -
        \max_{\substack{S\subseteq\{1,\ldots,N\}\\ |S|=q}}
        \frac{1}{2N}\sum_{i\in S}(a_i^\top w-y_i)^2 .
\end{equation}
Equivalently, \eqref{eq:trimmedreg} keeps the \(N-q\) smallest squared
residuals and adds a ridge term.  Starting from \(w^0=0\), all squared
residuals are equal to one, so every \(q\)-subset is active.  Thus the first
DCA step faces a combinatorial aggregate active set.  For a chosen subset
\(S^k\), the active gradient of the concave term is
\[
        v^k=
        \frac1N X_{S^k}^{\top}(X_{S^k}w^k-y_{S^k}),
\]
and the convex subproblem is the linear system
\[
        \left(\frac1N X^\top X+\lambda I\right)w^{k+1}
        =
        \frac1N X^\top y+v^k .
\]
The experiment uses \(q=500\), \(\lambda=10^{-2}\), and one DCA step from
\(w^0=0\).  The random row averages ten random \(q\)-subsets.  The full-space
and sketched greedy rows approximate the aggregate residual search in the
original space and the sketched space.

\begin{table}[!ht]
\centering
\caption{Trimmed sparse-regression benchmark \eqref{eq:trimmedreg} with
\(q=500\) and \(\lambda=10^{-2}\).  Here ``samples'' is \(N\) and
``features'' is the variable dimension \(n\).  ``Trimmed MSE'' is the average
squared residual over the retained \(N-q\) samples.}
\label{tab:trimmedreg}
\scriptsize
\resizebox{\linewidth}{!}{\begin{tabular}{lrrrlrrrr}
\toprule
data & samples & features & trim & method & dirs & objective & trimmed MSE & CPU s\\
\midrule
a8a & 22696 & 123 & 500 & centered aggregate & -- & 0.2018 & 0.4039 & 0.0177\\
a8a & 22696 & 123 & 500 & random aggregate & -- & 0.2018 & 0.4039 & 0.00196\\
a8a & 22696 & 123 & 500 & full-space greedy & -- & 0.201 & 0.4023 & 0.125\\
a8a & 22696 & 123 & 500 & RA sketched greedy & 207 & 0.2013 & 0.4025 & 0.556\\
\midrule
phishing & 11055 & 68 & 500 & centered aggregate & -- & 0.009928 & 0.004196 & 0.00247\\
phishing & 11055 & 68 & 500 & random aggregate & -- & 0.009964 & 0.004203 & 0.00145\\
phishing & 11055 & 68 & 500 & full-space greedy & -- & 0.01235 & 0.01065 & 0.0945\\
phishing & 11055 & 68 & 500 & RA sketched greedy & 121 & 0.01261 & 0.01037 & 0.0823\\
\midrule
w8a & 45546 & 300 & 500 & centered aggregate & -- & 0.1525 & 0.2944 & 0.00408\\
w8a & 45546 & 300 & 500 & random aggregate & -- & 0.1525 & 0.2945 & 0.00361\\
w8a & 45546 & 300 & 500 & full-space greedy & -- & 0.1513 & 0.2936 & 0.27\\
w8a & 45546 & 300 & 500 & RA sketched greedy & 484 & 0.1515 & 0.2942 & 2.58\\
\bottomrule
\end{tabular}

}
\end{table}

Table \ref{tab:trimmedreg} is included as a boundary test rather than
as evidence of uniform improvement.  On \texttt{a8a} and \texttt{w8a}, the sketched
greedy rule tracks the full-space greedy rule and improves the trimmed
objective relative to centered and random aggregate choices.  On
\texttt{phishing}, however, the same stationarity-residual heuristic gives a
larger first-step objective.  This does not conflict with the theory:
\eqref{eq:trimmedreg} is a block sum-of-max model, and the table uses a
greedy aggregate heuristic rather than the exact fixed-factor aggregate
safeguard in Corollary \ref{cor:approxblock}.  The result marks a
computational boundary.  In block robust-regression
models, the sketched residual can reproduce the full-space aggregate heuristic,
but objective-quality improvements depend on whether that heuristic is aligned
with the statistical loss.

\subsection{Synthetic complementarity and binary-penalty blocks}
\label{sec:binarycomp}
\label{sec:lcp}

Binary restrictions provide the simplest complementarity block:
\[
        x_i\in\{0,1\}
        \quad\Longleftrightarrow\quad
        0\le x_i\perp 1-x_i\ge0
        \quad (0\le x_i\le1).
\]
The corresponding penalty
\[
        \sum_i \min\{x_i,1-x_i\}
        =
        \frac n2-\sum_i\max\{x_i-\tfrac12,\tfrac12-x_i\}
\]
has exactly the two-piece max structure that produces nonstationary critical
points at \(x_i=1/2\).  The fully separable case is therefore a useful baseline
test but is too symmetric to be a meaningful performance benchmark: any
active vertex moves a centered coordinate to a box endpoint.  The reported
experiment uses a less symmetric complementarity penalty.

Let \(M\ge0\) be a
sparse row-stochastic matrix and \(y=Mx\).  On the box \(0\le x\le1\), consider
\begin{equation}\label{eq:lcppenalty}
        \min_{0\le x\le1}\ 
        \frac12\norm{x-c}^2+\rho\sum_{i=1}^n \min\{x_i,y_i\},
        \qquad c=\tfrac12\one,\quad \rho=1 .
\end{equation}
This is a simple MPEC/LCP-style complementarity penalty.  Since
\[
        \min\{x_i,y_i\}=x_i+y_i-\max\{x_i,y_i\},
\]
the concave part is a sum of two-piece max blocks.  For a chosen aggregate
active gradient \(v^k\), the DCA subproblem has the explicit solution
\[
        x^{k+1}=
        \Pi_{[0,1]^n}\left(c-\rho(\one+M^\top\one)+v^k\right).
\]
We generate eight random sparse row-stochastic matrices for each dimension,
with five nonzeros per row.  The centered rule averages tied block gradients,
and the random rule chooses one tied block gradient uniformly.  The full-space
rule uses greedy aggregate choices in \(\R^n\), while RA applies the same
greedy search after sketching the block gradients.

\begin{table}[!ht]
\centering
\caption{Synthetic LCP/MPEC penalty benchmark \eqref{eq:lcppenalty}.  The
columns ``min gap'' and ``product gap'' report
\(n^{-1}\sum_i\min\{x_i,(Mx)_i\}\) and
\(n^{-1}\sum_i |x_i(Mx)_i|\), respectively.}
\label{tab:lcp}
\scriptsize
\resizebox{\linewidth}{!}{\begin{tabular}{rlrrrrrr}
\toprule
$n$ & method & dirs & objective & min gap & product gap & iterations & CPU s\\
\midrule
100 & centered DCA & -- & 12.5 & 0 & 0 & 2.0 & 0.00211\\
100 & random block & -- & 9.72 & 0.00907 & 0.00355 & 5.8 & 0.0011\\
100 & full-space greedy & -- & 9.62 & 0.0138 & 0.00509 & 4.1 & 0.00185\\
100 & RA sketched greedy & 166 & 9.68 & 0.0101 & 0.00376 & 4.1 & 0.00307\\
\midrule
300 & centered DCA & -- & 37.5 & 0 & 0 & 2.0 & 0.00333\\
300 & random block & -- & 29.6 & 0.00996 & 0.00391 & 6.9 & 0.00447\\
300 & full-space greedy & -- & 28.2 & 0.013 & 0.00484 & 4.4 & 0.00751\\
300 & RA sketched greedy & 479 & 28.7 & 0.0113 & 0.00432 & 5.8 & 0.0183\\
\midrule
500 & centered DCA & -- & 62.5 & 0 & 0 & 2.0 & 0.0046\\
500 & random block & -- & 48.8 & 0.0102 & 0.00396 & 7.9 & 0.0123\\
500 & full-space greedy & -- & 47.1 & 0.0125 & 0.00475 & 5.1 & 0.0102\\
500 & RA sketched greedy & 791 & 48 & 0.0117 & 0.00446 & 5.4 & 0.0351\\
\bottomrule
\end{tabular}

}
\end{table}

Table \ref{tab:lcp} should be read as a block-safeguard baseline test rather
than a general MPEC benchmark.  The centered rule quickly
drives the complementarity gaps to zero by collapsing to the trivial
complementary point, but this gives a larger penalty objective.  The greedy
aggregate rules retain a better balance with the quadratic term and lower the
objective substantially.  The RA sketched greedy rule stays close to the
full-space greedy reference, with a modest additional cost from forming the
sketch.  This supports the interpretation of the QUBO and binary-penalty
discussion: for block penalties, the computational question is how well a tractable
aggregate search approximates the exact safeguard in
Corollary \ref{cor:approxblock}.

\subsection{OR-Library QUBO benchmarks and Gurobi reference}
\label{sec:qubo}

We also test a less degenerate binary quadratic model.  Unconstrained binary
quadratic programming is a standard combinatorial benchmark class
\cite{kochenberger2014qubo} and appears naturally in Boolean polynomial
optimization tests \cite{niu2022boolean}.  The instances are thirty
OR-Library UBQP problems \cite{beasley1990orlibrary}, namely
\(bqp50.1\)--\(bqp50.10\), \(bqp100.1\)--\(bqp100.10\), and
\(bqp250.1\)--\(bqp250.10\).  The original OR-Library files are maximization
instances; we use the equivalent minimization QUBO form and compare with the
best-known values reported in the Biq Mac library \cite{wiegele2007biqmac}:
\begin{equation}\label{eq:qubo}
        \min_{z\in\{0,1\}^n} z^\top Qz,\qquad n\in\{50,100,250\}.
\end{equation}
The RA-DCA-type methods are applied to the box-penalized relaxation
\begin{equation}\label{eq:qubopenalty}
        \min_{0\le x\le1}\ 
        x^\top Qx+\rho\sum_{i=1}^n\min\{x_i,1-x_i\},
        \qquad \rho=1.
\end{equation}
For an indefinite \(Q\), we test two DC decompositions
\[
        Q=Q_+-Q_-,\qquad Q_+\succeq0,\quad Q_-\succeq0.
\]
The first is the shift split \(Q_+=Q+\gamma I\), \(Q_-=\gamma I\), with
\(\gamma>-\lambda_{\min}(Q)\).  The second is the spectral split obtained by
separating the positive and negative eigenvalues of \(Q\).  In both cases
\eqref{eq:qubopenalty} becomes
\[
        g(x)-h(x)
        =x^\top Q_+x
        -\left(x^\top Q_-x+\rho\sum_i
        \max\{x_i-\tfrac12,\tfrac12-x_i\}\right)
        +\frac{\rho n}{2}.
\]
Given a block sign vector
\(s^k_i\in\partial\max\{x_i-\frac12,\frac12-x_i\}\), the DCA subproblem is the
bound-constrained convex quadratic program
\begin{equation*}
        \min_{0\le x\le1}\ 
        x^\top Q_+x-\left(2Q_-x^k+\rho s^k\right)^\top x,
\end{equation*}
or, equivalently,
\[
        \min_{0\le x\le1}\ \frac12 x^\top Hx+f^\top x,
        \quad H=2Q_+,\quad f=-(2Q_-x^k+\rho s^k).
\]
This problem is not separable unless \(Q_+\) is diagonal.  The inner
projected-gradient iteration uses the box projection
\[
        x\leftarrow \Pi_{[0,1]^n}\{x-\alpha(Hx+f)\}
        =\min\{1,\max\{0,x-\alpha(Hx+f)\}\}.
\]
The QUBO implementation therefore uses this projected-gradient iteration, with
Nesterov acceleration and an active-set polish, instead of calling a generic
QP solver.  The Gurobi reference solve uses the Gurobi MATLAB interface when
available.  The DC decomposition \(Q_+-Q_-\) is computed once per instance and
is not included in the reported CPU time.  In particular, the spectral
eigenvalue decomposition is excluded; the larger spectral runtimes in
Table \ref{tab:quboorlib} come
from the subsequent box-QP solves, since the spectral split forms dense
\(Q_+\) and \(Q_-\), whereas the shift split preserves the sparsity of \(Q\)
up to a diagonal shift.  The centered method uses only
\(x^0=\tfrac{1}{2}\one\) in the main OR-Library comparison.  The
random-vertex and RA-DCA block rows in that comparison use multistart budgets
of \(40\), \(60\), and \(80\) for \(n=50\), \(100\), and \(250\), respectively;
the first start is \(\tfrac{1}{2}\one\), and the remaining starts are
independent uniform points in \([0,1]^n\).  Only RA-DCA block uses sampled
directions.  For the main OR-Library rows, we set their number by the
finite-horizon version of
\eqref{eq:mkbudget}, using the block bound \(d=n\), \(C=1\), \(\eta=0.8\),
\(\delta=0.05\), and \(K=60\) times the number of starts.  This gives
\(m=95\), \(174\), and \(409\) for \(n=50\), \(100\), and \(250\),
respectively.  The constant in the embedding theorem is not optimized; this
choice is a direct normalized implementation of the \(m=O(n+\log(K/\delta))\)
scaling.  The reported objective is the original binary QUBO objective at the
rounded point \(z_i=\mathbf 1_{\{x_i\ge1/2\}}\).

In this QUBO comparison, Gurobi \cite{gurobi2026} is used only as a
reference global MIQP solver for \eqref{eq:qubo}, with time limits of \(10\),
\(15\), and \(20\) seconds for \(n=50\), \(100\), and \(250\), respectively.
These are fixed calibration caps, chosen so that the branch-and-cut reference
does not dominate the experiment.  Its runtime is not directly comparable with
the DCA heuristics, since it solves the binary problem rather than the penalty
relaxation.  Longer Gurobi runs would strengthen certificates by reducing MIP
gaps, but they would not change the DCA objective values reported in
Table \ref{tab:quboorlib}.
In Table \ref{tab:quboorlib} and Table \ref{tab:quboinstances} in Appendix
\ref{sec:quboappendix}, ``Gurobi MIP gap'' is the relative optimality gap
reported by Gurobi,
\[
        100\,\frac{|\mathrm{ObjBound}-\mathrm{ObjVal}|}
        {|\mathrm{ObjVal}|},
\]
where \(\mathrm{ObjVal}\) is the incumbent binary objective and
\(\mathrm{ObjBound}\) is Gurobi's best global bound at termination.  Small
nonzero entries, such as \(0.01\%\), can occur even when Gurobi reports
\texttt{OPTIMAL}, because the certificate is closed only to the solver's
optimality tolerance; this does not mean that the incumbent objective differs
from the best-known value.

\begin{table}[!ht]
\centering
\caption{OR-Library UBQP/QUBO results on \(bqp50\), \(bqp100\), and
\(bqp250\).  The gap columns report percentage gaps of the rounded binary
objective relative to the best-known value.  Decomposition time is excluded
from CPU s.  The column ``dirs'' applies only to RA-DCA block.  The last
column reports Gurobi's solver optimality gap at the time limit and is
therefore filled only on the Gurobi rows.}
\label{tab:quboorlib}
\scriptsize
\resizebox{\linewidth}{!}{\begin{tabular}{rlrrrrrrr}
\toprule
$n$ & method & starts & dirs & mean gap \% & max gap \% & hit rate & CPU s & Gurobi MIP gap \%\\
\midrule
\multirow{7}{*}{50} & centered DCA (shift) & 1 & -- & 0.37 & 2.37 & 0.60 & 0.0124 & --\\
 & random-vertex DCA (shift) & 40 & -- & 0.00 & 0.00 & 1.00 & 0.172 & --\\
 & RA-DCA block (shift) & 40 & 95 & 0.00 & 0.00 & 1.00 & 0.175 & --\\
 & centered DCA (spectral) & 1 & -- & 1.45 & 5.39 & 0.30 & 0.00588 & --\\
 & random-vertex DCA (spectral) & 40 & -- & 0.06 & 0.57 & 0.90 & 0.234 & --\\
 & RA-DCA block (spectral) & 40 & 95 & 0.06 & 0.57 & 0.90 & 0.222 & --\\
 & Gurobi & -- & -- & 0.00 & 0.00 & 1.00 & 0.017 & 0.00\\
\midrule
\multirow{7}{*}{100} & centered DCA (shift) & 1 & -- & 0.48 & 1.88 & 0.50 & 0.00652 & --\\
 & random-vertex DCA (shift) & 60 & -- & 0.09 & 0.80 & 0.70 & 0.373 & --\\
 & RA-DCA block (shift) & 60 & 174 & 0.07 & 0.65 & 0.80 & 0.379 & --\\
 & centered DCA (spectral) & 1 & -- & 1.96 & 4.12 & 0.10 & 0.0219 & --\\
 & random-vertex DCA (spectral) & 60 & -- & 0.14 & 1.10 & 0.70 & 1.33 & --\\
 & RA-DCA block (spectral) & 60 & 174 & 0.14 & 1.00 & 0.80 & 1.32 & --\\
 & Gurobi & -- & -- & 0.00 & 0.00 & 1.00 & 0.184 & 0.00\\
\midrule
\multirow{7}{*}{250} & centered DCA (shift) & 1 & -- & 0.58 & 1.36 & 0.10 & 0.0173 & --\\
 & random-vertex DCA (shift) & 80 & -- & 0.25 & 0.58 & 0.10 & 1.51 & --\\
 & RA-DCA block (shift) & 80 & 409 & 0.26 & 0.69 & 0.20 & 1.5 & --\\
 & centered DCA (spectral) & 1 & -- & 1.62 & 4.19 & 0.00 & 0.177 & --\\
 & random-vertex DCA (spectral) & 80 & -- & 0.48 & 2.26 & 0.10 & 16.8 & --\\
 & RA-DCA block (spectral) & 80 & 409 & 0.46 & 2.25 & 0.10 & 16.8 & --\\
 & Gurobi & -- & -- & 0.00 & 0.00 & 1.00 & 20.2 & 15.72\\
\bottomrule
\end{tabular}

}
\end{table}

\begin{figure}[t]
\centering
\includegraphics[width=.49\linewidth]{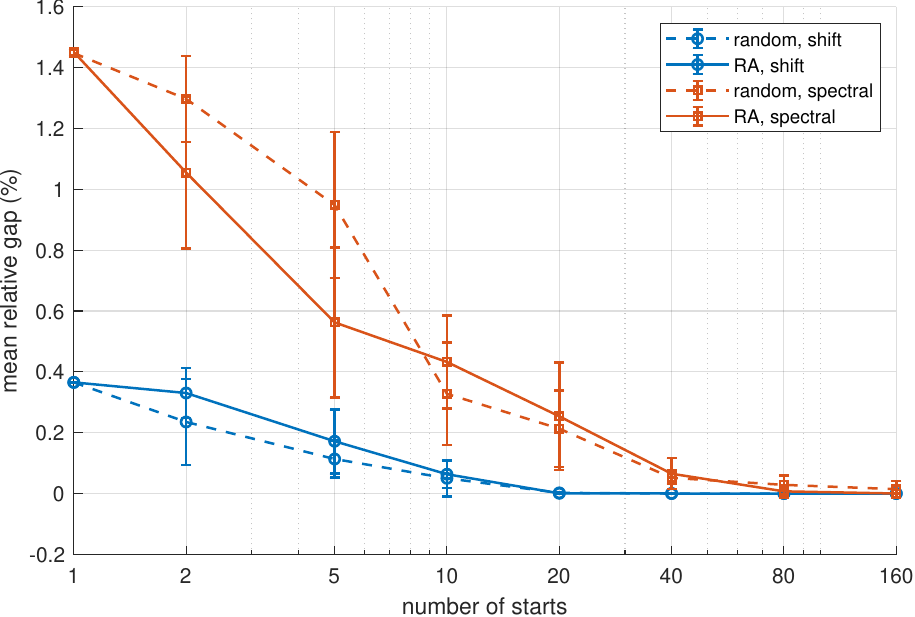}
\includegraphics[width=.49\linewidth]{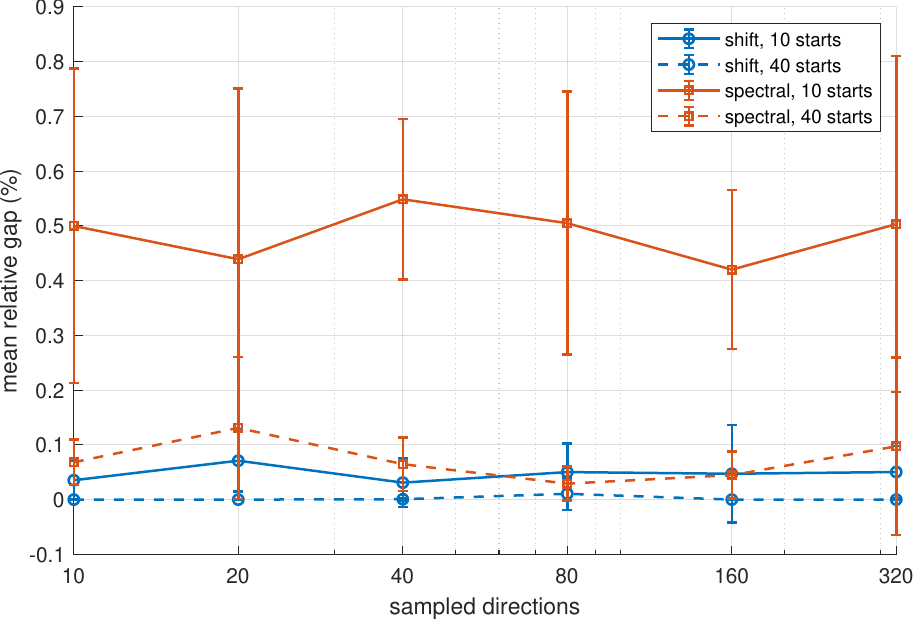}
\caption{QUBO budget sensitivity on the OR-Library \(bqp50.1\)--\(bqp50.10\)
instances.  Left: varying the number of starts with \(m=80\) directions for
RA-DCA block.  Right: varying the number of sampled directions for RA-DCA
block with ten and forty starts.  Error bars show one standard deviation over
eight random seeds.}
\label{fig:qubobudget}
\end{figure}

Table \ref{tab:quboorlib} shows that the QUBO behaviour is more nuanced than
the fully symmetric binary-complementarity test, and the larger instances are
no longer saturated by the local methods.  On \(bqp50\), the shift split with
forty starts reaches all ten best-known values for both multistart methods.
On \(bqp100\), the embedding-style direction budget improves the RA-shift mean
gap to \(0.07\%\), compared with \(0.09\%\) for random-vertex DCA, and gives a
\(0.80\) hit rate.  The same budget also raises the RA-spectral hit rate to
\(0.80\).  On \(bqp250\), the problem is harder and the larger direction
budget is not a cure by itself: RA-shift obtains a \(0.26\%\) mean gap and a
\(0.20\) hit rate, while random-vertex DCA has a \(0.25\%\) mean gap and a
\(0.10\) hit rate.  The spectral split is more expensive because the
projected-gradient QP is harder; RA-spectral slightly improves the mean gap
relative to random-vertex DCA (\(0.46\%\) versus \(0.48\%\)), but not the hit
rate.

Figure \ref{fig:qubobudget} gives a broader view of the two most natural
budget parameters on the \(bqp50\) group.  Increasing the number of starts
steadily improves both random-vertex DCA and RA-DCA block, for both
decompositions.  With forty starts the shift split reaches the best-known
value on all ten \(bqp50\) instances in all eight random repeats, while
RA-DCA block with the spectral split falls
below a \(0.07\%\) mean gap.  Increasing the starts further continues to help
the spectral split: in this sweep its mean gap drops to about \(0.007\%\) at
eighty starts and to zero at 160 starts.  In contrast, increasing the number
of sampled directions has only a small and nonmonotone effect.  The same
pattern remains when the direction sweep is repeated with forty starts: the
curves are lower because the multi-start budget is larger, but there is still
no monotone improvement as \(m\) grows.  This is expected for the binary
penalty: after a random continuous start, most coordinates are not tied at
\(x_i=1/2\), so the RA direction rule mainly affects the centered start and
does not substitute for multi-start exploration.  For QUBO, the additional
budget is therefore more effective when assigned first to starts; increasing
directions is a secondary tuning knob.
This also explains why RA-DCA block is not uniformly dominant on QUBO.  After
random continuous starts, most coordinates quickly lie away from \(x_i=1/2\),
and the binary penalty supplies an essentially unique block subgradient.  At
such iterates there is little active-set ambiguity for the randomized rule to
exploit, so the quality is governed more by the start pool, the DC split, and
the box-QP dynamics than by the direction sketch.

Appendix \ref{sec:quboappendix} gives the instancewise gaps, gap profiles,
spectral-split tuning diagnostics, and a separate \(bqp250.8\) backend
diagnostic.  Those tables support the same interpretation but are separated
from the main text because they refine the QUBO boundary case rather than
establish the finite-max active-set mechanism itself.

\subsection{Computational interpretation}

The single finite-max experiments are the main computational tests of the
theory: the sampled vertex safeguard recovers near-full-vertex active
gradients and avoids nonstationary averaged-subgradient criticality at
controlled scale.  The
top-\(k\) support experiment gives the clearest block result, because exact
aggregate enumeration is combinatorial while the sketched greedy rule closely
tracks the full-space greedy reference.  The remaining boundary tests
show that active-set screening is not the only determinant of performance:
statistical objective alignment matters in trimmed regression, and the QUBO
results are strongly affected by starts and the DC decomposition.  Thus the
computational claim is specific: RA-DCA strengthens active-subgradient
selection at nonsmooth ties or near-ties; it is not a replacement for
problem-specific global or multi-start strategies.

\section{Conclusions}
\label{sec:conclusion}

RA-DCA is based on the following observation: for finite-max DC programs,
active-gradient convex combinations are useful computationally but are not by
themselves directional-stationarity certificates.  The proposed method keeps
the inexpensive DCA subproblem, uses randomized active-set projections to
screen active vertices, and calls the convex-combination LP only as a
low-residual centering fallback.  The vertex safeguard is the mechanism that
restores a provable directional-stationarity guarantee.  The block corollary
shows how the same stationarity certificate extends to finite sums of max
terms when
aggregate active vertices can be safeguarded.

The dominant linear algebra is a product between random directions and active
gradients, making the active-set screening layer suitable for vectorized
implementation and, when the surrounding subproblem solver has enough
arithmetic intensity, GPU acceleration.  The clearest computational evidence
comes from settings where the active set itself is the bottleneck: degenerate
finite maxima, sparse support-function models, and top-\(k\) aggregate
selection.  In these cases RA-DCA addresses the targeted stationarity gap:
averaged active subgradients can certify only criticality, while a sampled
vertex residual exposes the remaining directional-stationarity violation.

The mixed QUBO and trimmed-regression results also indicate the boundary of
the method: when iterates spend little time at nonsmooth ties, or when starts
and the DC decomposition determine the solution quality, active-set selection
is secondary.  For binary and QUBO-type penalty models, a natural algorithmic
extension is to combine the randomized active-set selection layer with
adaptive DC decompositions, penalty-continuation strategies, restart or
multistart policies, and specialized bound-constrained QP solvers followed by
rounding and polishing.  Promising extensions also include larger block
sum-of-max applications where active-set ambiguity remains central, such as
MIPLIB-derived penalty instances \cite{gleixner2021miplib}, clustering models,
and implicit top-\(k\) or robust-loss formulations.  Other directions include
acceleration techniques, such as inertial or extrapolated DCA steps
\cite{wen2018proximal}, adaptive sampling budgets, and batched CPU/GPU kernels
for large structured subproblems, as well as stochastic and variance-reduced
variants \cite{reddi2016stochastic} for empirical-risk and machine-learning
models, where both data points and active max blocks may be sampled; such
extensions would require new convergence analysis for stochastic active sets
and inexact subproblem solves.

\appendix\normalsize

\section{Worst-Iterate Residual Bound}
\label{sec:residualcomplexity}

\begin{proposition}[Worst-iterate residual bound]\label{prop:resrate}
Suppose Assumptions \ref{ass:basic} and \ref{ass:embed} hold, and work on
iterations for which the embedding event \eqref{eq:embedding} holds.  Let
\(F_{\inf}\) be a lower bound of \(F\),
\(\Delta_0=F(x^0)-F_{\inf}\),
\(E_N=\sum_{k=0}^N\eps_k\), and
\[
        C_\eta=\frac{(1+\eta)(L_g+\sigma)}{1-\eta}.
\]
Then, for every such iteration \(k\),
\begin{equation}\label{eq:resstepbound}
        R_{\rm d}(x^k)
        \le
        \max\left\{
        \frac{\tau_k}{1-\eta},
        C_\eta\norm{x^{k+1}-x^k}
        \right\}.
\end{equation}
This gives the ergodic step estimate:
\begin{equation}\label{eq:steprate}
        \min_{0\le k\le N}\norm{x^{k+1}-x^k}
        \le
        \left(\frac{2(\Delta_0+E_N)}{\mu(N+1)}\right)^{1/2}.
\end{equation}
If, in addition, \(\tau_k\) is nonincreasing, then
\begin{equation}\label{eq:dresrate}
        \min_{\lfloor N/2\rfloor\le k\le N}R_{\rm d}(x^k)
        \le
        \max\left\{
        \frac{\tau_{\lfloor N/2\rfloor}}{1-\eta},
        C_\eta
        \left(\frac{4(\Delta_0+E_N)}{\mu(N+1)}\right)^{1/2}
        \right\}.
\end{equation}
\end{proposition}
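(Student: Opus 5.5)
The bound \eqref{eq:resstepbound} will come from a case split on the safeguard branch at iteration \(k\). The two rate statements will then follow by summing the descent inequality of Lemma \ref{lem:descent}. Throughout I fix an iteration \(k\) at which \eqref{eq:embedding} holds on \(\mathcal S_k\). The first thing I use is that the exact active set is contained in the numerical one: \(\eps_k\ge0\) gives \(\cA(x^k)\subseteq\cA_k\). So every vector \(\nabla\psi_i(x^k)-g_k\) with \(i\in\cA(x^k)\) lies in \(\mathcal S_k\), and the embedding applies to it.

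\emph{Case 1: the safeguard does not trigger,} \(\widehat R_k\le\tau_k\). For each \(i\in\cA(x^k)\), the lower embedding bound gives
\[
(1-\eta)\norm{\nabla\psi_i(x^k)-g_k}\le\norm{D_k(\nabla\psi_i(x^k)-g_k)}\le\widehat R_k\le\tau_k .
\]
Taking the maximum over \(i\) yields \(R_{\rm d}(x^k)\le\tau_k/(1-\eta)\).

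\emph{Case 2: the safeguard triggers,} so \(v^k=\nabla\psi_{i_k}(x^k)\) with \(\norm{D_k(v^k-g_k)}=\widehat R_k\). For any \(i\in\cA(x^k)\), using the lower embedding bound, then maximality of \(i_k\) over \(\cA_k\), then the upper embedding bound,
\[
\norm{\nabla\psi_i(x^k)-g_k}\le\frac{\widehat R_k}{1-\eta}
\le\frac{1+\eta}{1-\eta}\norm{v^k-g_k}.
\]
The optimality condition of \eqref{eq:update} is \(v^k=\nabla g(x^{k+1})+\sigma(x^{k+1}-x^k)\). Combined with the \(L_g\)-Lipschitz continuity of \(\nabla g\), it gives \(\norm{v^k-g_k}\le(L_g+\sigma)\norm{x^{k+1}-x^k}\). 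Hence \(R_{\rm d}(x^k)\le C_\eta\norm{x^{k+1}-x^k}\). The two cases together give \eqref{eq:resstepbound}.

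For \eqref{eq:steprate}, note that in both branches \(v^k\in\conv\{\nabla\psi_i(x^k):i\in\cA_k\}\): it is either a vertex or \(G_k\alpha^k\) with \(\alpha^k\) in the simplex. So Lemma \ref{lem:descent} applies at every iteration. Summing \eqref{eq:descent} over \(k=0,\dots,N\) gives
\[
\frac{\mu}{2}\sum_{k=0}^N\norm{x^{k+1}-x^k}^2\le\Delta_0+E_N .
\]
Bounding the minimum by the average of the \(N+1\) terms then yields \eqref{eq:steprate}.

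For \eqref{eq:dresrate}, set \(k_0=\lfloor N/2\rfloor\) and sum \eqref{eq:descent} only from \(k_0\) to \(N\). Telescoping gives
\[
F(x^{k_0})-F(x^{N+1})+\sum_{k=k_0}^N\eps_k .
\]
Since \(F(x^{k_0})\le F(x^0)+\sum_{k<k_0}\eps_k\) by the same recursion, this quantity is still at most \(\Delta_0+E_N\). The window contains \(N-k_0+1\ge(N+1)/2\) indices, so some \(k^*\in[k_0,N]\) satisfies
\[
\norm{x^{k^*+1}-x^{k^*}}\le\bigl(4(\Delta_0+E_N)/(\mu(N+1))\bigr)^{1/2}.
\]
Applying \eqref{eq:resstepbound} at \(k^*\) and using \(\tau_{k^*}\le\tau_{k_0}\) (monotonicity) gives \eqref{eq:dresrate}.

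The main subtlety is bookkeeping rather than analysis. First, the exact active set, which defines \(R_{\rm d}\), must be dominated by the numerical active set, which is what the safeguard scans; this holds because \(\eps_k\ge0\). Second, the embedding is needed at the selected index \(k^*\). I will read the hypothesis ``work on iterations for which the embedding event holds'' as applying to all \(k\) in the window. Alternatively, one can note that the inequality holds on that event, which by Assumption \ref{ass:embed} fails only finitely often almost surely.
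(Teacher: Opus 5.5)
Your proof is correct and follows the paper's argument essentially step for step: the inclusion $\cA(x^k)\subseteq\cA_k$, the two-branch case split using the lower and upper embedding bounds together with the first-order condition of \eqref{eq:update}, and then summing \eqref{eq:descent} over all iterations and over the window $\{\lfloor N/2\rfloor,\ldots,N\}$. You also state explicitly that the embedding event is needed at the selected index $k^*$ in the window, which the paper leaves implicit in its phrase ``work on iterations for which the embedding event holds.''
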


\begin{proof}
Fix an iteration on which \eqref{eq:embedding} holds.  Since
\(\eps_k\ge0\), the exact active set \(\cA(x^k)\) is contained in
\(\cA_k\).  If the safeguard is not triggered, then
\(\widehat R_k\le\tau_k\), and the lower embedding bound gives
\[
        (1-\eta)R_{\rm d}(x^k)\le \widehat R_k\le\tau_k .
\]
If the safeguard is triggered, \(v^k=\nabla\psi_{i_k}(x^k)\) and
\(\widehat R_k=\norm{D_k(v^k-\nabla g(x^k))}\).  Since \(i_k\) maximizes the
sampled vertex residual, the lower embedding bound gives
\((1-\eta)R_{\rm d}(x^k)\le\widehat R_k\).  The upper embedding bound and the
first-order condition for \eqref{eq:update} give
\[
        \widehat R_k
        \le (1+\eta)\norm{v^k-\nabla g(x^k)}
        \le (1+\eta)(L_g+\sigma)\norm{x^{k+1}-x^k}.
\]
This proves \eqref{eq:resstepbound}.  Inequality \eqref{eq:steprate} follows
by summing the numerical-active-set descent estimate \eqref{eq:descent}.
Applying the same summability bound to the block
\(\{\lfloor N/2\rfloor,\ldots,N\}\) gives an index in that block with
\[
        \norm{x^{k+1}-x^k}
        \le \left(\frac{4(\Delta_0+E_N)}{\mu(N+1)}\right)^{1/2}.
\]
Using \eqref{eq:resstepbound} at this index and the monotonicity of
\(\tau_k\) proves \eqref{eq:dresrate}.
\qed
\end{proof}

\begin{remark}
Proposition \ref{prop:resrate} is a worst-iterate complexity statement for
stationarity.  It shows that, if \(\tau_k=O(k^{-1/2})\) or smaller and the
sampling budget enforces the embedding event, the directional-stationarity
residual has the same \(O(k^{-1/2})\) worst-iterate decay that follows from
standard descent analyses for proximal DCA, up to the accumulated numerical
active-set error \(E_N\).  Under Assumption \ref{ass:basic}, \(E_N\) is bounded
uniformly in \(N\).
\end{remark}

\section{Approximate Block Safeguards and Inexact Subproblems}
\label{sec:appendixblock}

\begin{corollary}[Approximate aggregate safeguard]\label{cor:approxblock}
In the setting of Corollary \ref{cor:blockextension}, suppose that whenever
\(\widehat R_k^{\rm b}>\tau_k\) the block method selects an aggregate vertex
\(v^k\in\cV_k^\eps\) satisfying
\[
        \norm{D_k(v^k-\nabla g(x^k))}
        \ge \theta\,\widehat R_k^{\rm b}
\]
for some fixed \(\theta>0\).  Then every accumulation point is
directional-stationary for \eqref{eq:blockmodel} with probability one.
\end{corollary}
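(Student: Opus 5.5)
The argument follows the proof of Corollary \ref{cor:blockextension}, which in turn repeats Theorem \ref{thm:dstationarity}. The only change is that the selected aggregate vertex is no longer the sketched maximizer: it only attains a fixed fraction \(\theta\) of \(\widehat R_k^{\rm b}\). Since \(\theta\) is fixed, the step-length contradiction goes through with the constant \(\frac{1-\eta}{1+\eta}\) replaced by \(\theta\frac{1-\eta}{1+\eta}\).

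\emph{Step 1 (descent).} The selected \(v^k\) lies in \(\cV_k^\eps\) in the triggered branch, and in \(\operatorname{conv}\cV_k^\eps\) in the fallback branch. In either case the blockwise version of Lemma \ref{lem:descent} gives
\[
F_{\rm b}(x^{k+1})\le F_{\rm b}(x^k)+\sum_{\ell}\eps_{\ell k}-\frac{\mu}{2}\norm{x^{k+1}-x^k}^2 .
\]
By summability of the block tolerances, the iterates stay in a compact enlarged level set and \(\norm{x^{k+1}-x^k}\to0\).

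\emph{Step 2 (probabilistic reduction).} By Borel--Cantelli, on an almost-sure event the embedding \eqref{eq:embedding} on \(\mathcal S_k^{\rm b}\) fails only finitely often. We work on this event and fix an accumulation point \(\bar x\) with a subsequence \(x^k\to\bar x\).

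\emph{Step 3 (contradiction).} Suppose \(R_{\rm b}(\bar x)>0\). Pick active indices \(\bar i_\ell\in\cA_\ell(\bar x)\) whose aggregate \(\bar v=\sum_\ell\nabla\psi_{\ell\bar i_\ell}(\bar x)\) satisfies \(\norm{\bar v-\nabla g(\bar x)}>0\).

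Apply the block analogue of Assumption \ref{ass:numactive} successively for \(\ell=1,\ldots,p\), passing to a further subsequence each time. This gives \(\bar i_\ell\in\cA_{\ell k}\) for all \(\ell\) simultaneously, for large \(k\). Here finiteness of \(p\) is what makes the iterated extraction legitimate, and this is the one point needing care.

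Then \(w^k:=\sum_\ell\nabla\psi_{\ell\bar i_\ell}(x^k)\in\cV_k^\eps\). By continuity of the gradients, \(\norm{w^k-\nabla g(x^k)}\ge\rho>0\) for large \(k\). The lower embedding bound gives
\[
\widehat R_k^{\rm b}\ge(1-\eta)\rho ,
\]
so \(\widehat R_k^{\rm b}>\tau_k\) once \(\tau_k<(1-\eta)\rho\), and the safeguard fires. The approximate selection rule together with the upper embedding bound gives
\[
\norm{v^k-\nabla g(x^k)}\ge\frac{\norm{D_k(v^k-\nabla g(x^k))}}{1+\eta}\ge\frac{\theta(1-\eta)}{1+\eta}\rho .
\]

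\emph{Step 4 (closing).} The optimality condition of \eqref{eq:update} is \(\nabla g(x^{k+1})-v^k+\sigma(x^{k+1}-x^k)=0\). Combined with the Lipschitz continuity of \(\nabla g\), it gives
\[
\norm{v^k-\nabla g(x^k)}\le(L_g+\sigma)\norm{x^{k+1}-x^k}.
\]
So \(\norm{x^{k+1}-x^k}\) is bounded below along the subsequence, contradicting Step 1. Hence \(R_{\rm b}(\bar x)=0\), and by the block separation argument following \eqref{eq:blockres}, \(\bar x\) is directional-stationary.

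The main technical point is the simultaneous active-set consistency across the \(p\) blocks in Step 3. Everything else is the single-max argument with the extra constant \(\theta\). Step 3 also relies on \(\theta\) being fixed, independent of \(k\), so that the lower bound \(\theta(1-\eta)\rho/(1+\eta)\) does not degenerate.
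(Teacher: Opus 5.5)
Your proposal is correct and follows the paper's own argument: the same descent, embedding, and first-order-condition contradiction as in Corollary~\ref{cor:blockextension}, with the lower bound weakened by the fixed factor to \(\theta(1-\eta)\rho/(1+\eta)\). Your explicit iterated subsequence extraction over the \(p\) blocks spells out what the paper compresses into ``numerical active-set consistency would give an aggregate vertex \(v^k\in\cV_k^\eps\) converging to a limiting active aggregate vertex.''
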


\begin{proof}
The proof is the same as that of Corollary \ref{cor:blockextension}.  If
\(R_{\rm b}(\bar x)>0\), then along a subsequence
\(\widehat R_k^{\rm b}\ge (1-\eta)\rho\) for some \(\rho>0\).  The approximate
safeguard gives a selected aggregate vertex with sampled residual at least
\(\theta(1-\eta)\rho\).  The embedding upper bound then yields
\[
        \norm{v^k-\nabla g(x^k)}
        \ge \frac{\theta(1-\eta)}{1+\eta}\rho ,
\]
and the first-order condition again forces
\(\norm{x^{k+1}-x^k}\) to be bounded away from zero, contradicting descent.
\qed
\end{proof}

Corollary \ref{cor:approxblock} states the formal boundary between theory and
implementation for block heuristics.  A block implementation is covered by
the convergence proof if its aggregate-vertex search is an exact or
fixed-factor approximation to \eqref{eq:blocksampledres}.  Greedy block rules
without such a verified factor remain computational tests of the same
active-set geometry.

The same argument allows inexact convex subproblem solves.  If the computed
update satisfies
\[
        \nabla g(x^{k+1})-v^k+\sigma(x^{k+1}-x^k)=e^k,
        \qquad \norm{e^k}\to0,
\]
and the descent estimate remains valid up to a summable error, then the
almost-sure stationarity conclusion is unchanged.  For the residual bound in
Proposition \ref{prop:resrate}, a relative residual
\(\norm{e^k}\le\kappa\norm{x^{k+1}-x^k}\) simply replaces \(L_g+\sigma\) by
\(L_g+\sigma+\kappa\) in \(C_\eta\).

\section{Additional QUBO Diagnostics}
\label{sec:quboappendix}

This appendix collects QUBO diagnostics that refine the boundary-test
interpretation in Section \ref{sec:qubo}.  They are not needed for the main
finite-max claim, but they make clear where the QUBO behaviour is governed by
starts, the DC split, and implementation backend rather than by active-set
selection alone.

\begin{table}[!ht]
\centering
\caption{Instancewise QUBO gaps on the OR-Library groups \(bqp50\), \(bqp100\),
and \(bqp250\), separated by horizontal rules.  Entries C-shift, R-shift,
RA-shift, and RA-spec are percentage gaps relative to the best-known value;
Gurobi MIP gap is the solver optimality gap at the size-dependent time limit
\(10\), \(15\), or \(20\) seconds.}
\label{tab:quboinstances}
\scriptsize
\resizebox{\linewidth}{!}{\begin{tabular}{lrrrrrrr}
\toprule
instance & best & C-shift & R-shift & RA-shift & RA-spec & Gurobi MIP gap & Gurobi s\\
\midrule
bqp50.1 & -2098 & 0.00 & 0.00 & 0.00 & 0.00 & 0.00 & 0.022\\
bqp50.2 & -3702 & 0.00 & 0.00 & 0.00 & 0.00 & 0.00 & 0.017\\
bqp50.3 & -4626 & 0.00 & 0.00 & 0.00 & 0.00 & 0.00 & 0.018\\
bqp50.4 & -3544 & 2.37 & 0.00 & 0.00 & 0.00 & 0.00 & 0.014\\
bqp50.5 & -4012 & 0.00 & 0.00 & 0.00 & 0.00 & 0.00 & 0.015\\
bqp50.6 & -3693 & 0.00 & 0.00 & 0.00 & 0.00 & 0.00 & 0.024\\
bqp50.7 & -4520 & 0.22 & 0.00 & 0.00 & 0.00 & 0.00 & 0.002\\
bqp50.8 & -4216 & 0.00 & 0.00 & 0.00 & 0.57 & 0.00 & 0.018\\
bqp50.9 & -3780 & 1.01 & 0.00 & 0.00 & 0.00 & 0.00 & 0.02\\
bqp50.10 & -3507 & 0.06 & 0.00 & 0.00 & 0.00 & 0.00 & 0.02\\
\midrule
bqp100.1 & -7970 & 1.88 & 0.80 & 0.65 & 1.00 & 0.00 & 0.178\\
bqp100.2 & -11036 & 0.45 & 0.00 & 0.07 & 0.00 & 0.00 & 0.188\\
bqp100.3 & -12723 & 0.00 & 0.00 & 0.00 & 0.00 & 0.01 & 0.162\\
bqp100.4 & -10368 & 0.00 & 0.00 & 0.00 & 0.00 & 0.00 & 0.17\\
bqp100.5 & -9083 & 0.00 & 0.00 & 0.00 & 0.42 & 0.00 & 0.185\\
bqp100.6 & -10210 & 1.07 & 0.08 & 0.00 & 0.00 & 0.00 & 0.243\\
bqp100.7 & -10125 & 1.28 & 0.04 & 0.00 & 0.00 & 0.00 & 0.186\\
bqp100.8 & -11435 & 0.00 & 0.00 & 0.00 & 0.00 & 0.00 & 0.182\\
bqp100.9 & -11455 & 0.00 & 0.00 & 0.00 & 0.00 & 0.00 & 0.164\\
bqp100.10 & -12565 & 0.14 & 0.00 & 0.00 & 0.00 & 0.01 & 0.179\\
\midrule
bqp250.1 & -45607 & 0.26 & 0.11 & 0.00 & 0.16 & 9.82 & 20.2\\
bqp250.2 & -44810 & 1.36 & 0.16 & 0.16 & 0.68 & 14.89 & 20.2\\
bqp250.3 & -49037 & 0.34 & 0.00 & 0.15 & 0.05 & 9.96 & 20.2\\
bqp250.4 & -41274 & 0.55 & 0.27 & 0.09 & 0.36 & 14.60 & 20.2\\
bqp250.5 & -47961 & 0.58 & 0.15 & 0.05 & 0.25 & 13.38 & 20.2\\
bqp250.6 & -41014 & 0.99 & 0.58 & 0.69 & 0.59 & 21.86 & 20.2\\
bqp250.7 & -46757 & 0.00 & 0.00 & 0.00 & 0.00 & 17.08 & 20.2\\
bqp250.8 & -35726 & 0.56 & 0.56 & 0.56 & 2.25 & 25.38 & 20.2\\
bqp250.9 & -48916 & 0.53 & 0.44 & 0.37 & 0.20 & 10.93 & 20.2\\
bqp250.10 & -40442 & 0.60 & 0.26 & 0.50 & 0.05 & 19.27 & 20.2\\
\bottomrule
\end{tabular}

}
\end{table}

\begin{table}[!ht]
\centering
\caption{QUBO tuning check for RA-DCA block.  Each entry averages eight
independent random seeds on \(bqp50.1\)--\(bqp50.10\).  Decomposition time is
excluded.}
\label{tab:qubotuning}
\scriptsize
\resizebox{\linewidth}{!}{\begin{tabular}{lrrrrrr}
\toprule
configuration & \(\rho\) & dirs & starts & mean gap \% & std gap \% & hit rate\\
\midrule
shift, baseline & 1.00 & 80 & 10 & 0.087 & 0.095 & 0.875\\
spectral, baseline & 1.00 & 80 & 10 & 0.363 & 0.199 & 0.575\\
shift, larger penalty & 5.00 & 80 & 10 & 0.084 & 0.086 & 0.838\\
spectral, smaller penalty & 0.25 & 80 & 10 & 0.481 & 0.264 & 0.625\\
spectral, larger penalty & 5.00 & 80 & 10 & 0.538 & 0.360 & 0.563\\
spectral, 20 starts & 1.00 & 80 & 20 & 0.185 & 0.183 & 0.725\\
spectral, 40 starts & 1.00 & 80 & 40 & 0.093 & 0.119 & 0.900\\
shift, more directions & 1.00 & 320 & 10 & 0.034 & 0.042 & 0.888\\
\bottomrule
\end{tabular}

}
\end{table}

\begin{table}[!ht]
\centering
\caption{QUBO gap profile over all thirty OR-Library instances.  The first
four columns are percentage gap statistics relative to the best-known value;
the last three columns report the fraction of instances whose gap is below the
specified threshold.}
\label{tab:qubogapprofile}
\scriptsize
\begin{tabular}{lrrrrrrr}
\toprule
method & mean & median & std & max & $\le0.1\%$ & $\le0.5\%$ & $\le1\%$\\
\midrule
centered shift & 0.48 & 0.24 & 0.62 & 2.37 & 0.43 & 0.60 & 0.80\\
random shift & 0.11 & 0.00 & 0.21 & 0.80 & 0.70 & 0.90 & 1.00\\
RA shift & 0.11 & 0.00 & 0.21 & 0.69 & 0.77 & 0.87 & 1.00\\
centered spectral & 1.68 & 1.15 & 1.63 & 5.39 & 0.20 & 0.37 & 0.47\\
random spectral & 0.23 & 0.00 & 0.47 & 2.26 & 0.67 & 0.87 & 0.93\\
RA spectral & 0.22 & 0.00 & 0.46 & 2.25 & 0.67 & 0.83 & 0.93\\
\bottomrule
\end{tabular}

\end{table}

\begin{figure}[!ht]
\centering
\includegraphics[width=.78\linewidth]{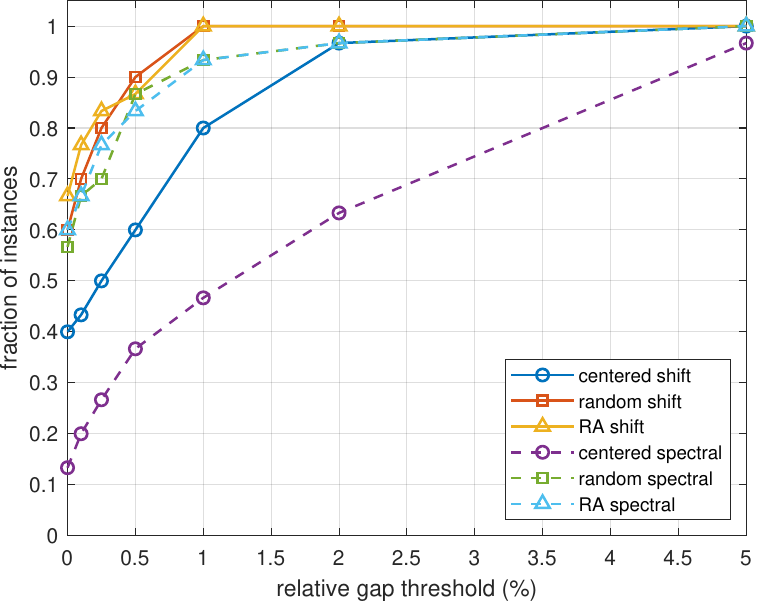}
\caption{QUBO gap profiles over all thirty OR-Library instances.  Each curve
reports the fraction of instances whose rounded binary objective is within the
given percentage gap from the best-known value.}
\label{fig:qubogapprofile}
\end{figure}

The gap profile gives a more cautious view than a single mean.  For the shift
split, random-vertex DCA and RA-DCA block have the same \(0.11\%\) mean gap,
but RA-DCA block improves the fraction of instances within \(0.1\%\) of the
best-known value from \(0.70\) to \(0.77\) and lowers the worst gap from
\(0.80\%\) to \(0.69\%\).  For the spectral split, the two vertex-based rules
are essentially tied in aggregate, which is consistent with the starts
sensitivity reported in Figure \ref{fig:qubobudget}.

\begin{table}[!ht]
\centering
\caption{Starts and backend sensitivity on \(bqp250.8\).  The table compares
serial CPU, batched CPU, and batched GPU prototypes using the same initial
multistart pool.  The speedup columns are relative to the matching serial CPU
or batched CPU row.}
\label{tab:qubobqp2508starts}
\scriptsize
\begingroup
\renewcommand{\arraystretch}{0.92}
\resizebox{0.96\linewidth}{!}{\begin{tabular}{llrrrrr}
\toprule
split & backend & starts & gap \% & time s & speedup-S & speedup-B\\
\midrule
shift & serial CPU & 20 & 0.560 & 1.69 & -- & --\\
shift & batched CPU & 20 & 0.560 & 2.23 & 0.76 & --\\
shift & batched GPU & 20 & 0.560 & 2.48 & 0.68 & 0.90\\
shift & serial CPU & 40 & 0.560 & 2.13 & -- & --\\
shift & batched CPU & 40 & 0.560 & 1.57 & 1.36 & --\\
shift & batched GPU & 40 & 0.560 & 2.99 & 0.71 & 0.52\\
shift & serial CPU & 80 & 0.560 & 4.07 & -- & --\\
shift & batched CPU & 80 & 0.560 & 2.6 & 1.56 & --\\
shift & batched GPU & 80 & 0.560 & 3.3 & 1.23 & 0.79\\
shift & serial CPU & 160 & 0.560 & 6.47 & -- & --\\
shift & batched CPU & 160 & 0.560 & 2.09 & 3.10 & --\\
shift & batched GPU & 160 & 0.560 & 1.72 & 3.76 & 1.21\\
shift & serial CPU & 256 & 0.498 & 7.13 & -- & --\\
shift & batched CPU & 256 & 0.498 & 6.67 & 1.07 & --\\
shift & batched GPU & 256 & 0.498 & 3.47 & 2.06 & 1.92\\
spectral & serial CPU & 20 & 3.101 & 9.13 & -- & --\\
spectral & batched CPU & 20 & 3.101 & 1.05 & 8.68 & --\\
spectral & batched GPU & 20 & 3.101 & 3.39 & 2.70 & 0.31\\
spectral & serial CPU & 40 & 2.220 & 15 & -- & --\\
spectral & batched CPU & 40 & 2.220 & 2.34 & 6.43 & --\\
spectral & batched GPU & 40 & 2.220 & 3.1 & 4.85 & 0.75\\
spectral & serial CPU & 80 & 2.069 & 30.5 & -- & --\\
spectral & batched CPU & 80 & 2.069 & 1.28 & 23.89 & --\\
spectral & batched GPU & 80 & 2.069 & 2.08 & 14.65 & 0.61\\
spectral & serial CPU & 160 & 1.360 & 60.1 & -- & --\\
spectral & batched CPU & 160 & 1.360 & 5.98 & 10.05 & --\\
spectral & batched GPU & 160 & 1.360 & 3.39 & 17.75 & 1.77\\
spectral & serial CPU & 256 & 1.050 & 96.5 & -- & --\\
spectral & batched CPU & 256 & 1.050 & 2.22 & 43.56 & --\\
spectral & batched GPU & 256 & 1.050 & 1.73 & 55.67 & 1.28\\
\bottomrule
\end{tabular}

}
\endgroup
\end{table}

\begin{figure}[!ht]
\centering
\includegraphics[width=.92\linewidth]{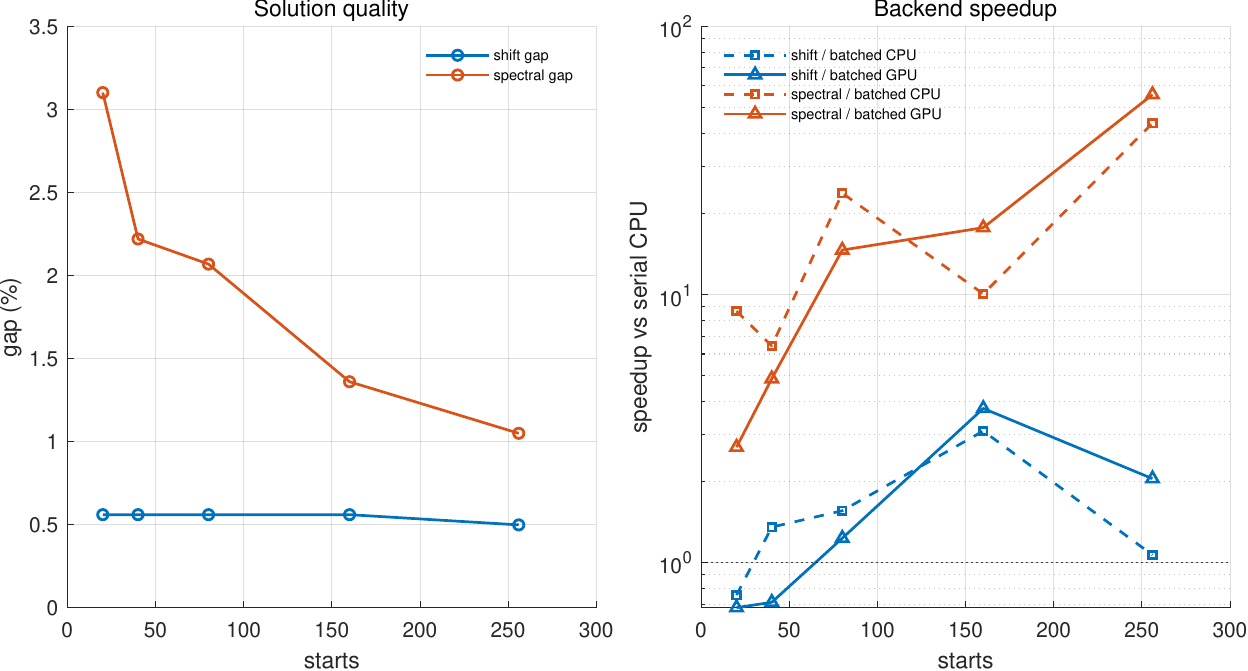}
\caption{Starts and backend sensitivity on \(bqp250.8\).  Left: percentage
gap versus the number of starts.  Right: speedup relative to the serial CPU
implementation; the horizontal line marks parity.  The batched implementations
solve all starts as a matrix of iterates, while the serial CPU implementation
follows the default per-start loop used in the main QUBO table.  For each
split and start budget, all three backends receive the same initial points.}
\label{fig:qubobqp2508starts}
\end{figure}

The \(bqp250.8\) diagnostic separates search quality from backend timing: for a
fixed split and start budget, all three backends use the same initial points
and therefore report the same gap.  The timing rows show that GPU acceleration
is useful only with a genuinely batched implementation; the serial per-start
GPU path is slowed by data movement and per-start kernel overhead.

\section{LP and Safeguard Diagnostics}
\label{sec:lpbackendappendix}

This appendix records diagnostics for the active convex-combination LP
\eqref{eq:lp}.  The first table reports how often the safeguard and LP branches
are used in the finite-max experiments.  A second diagnostic isolates a case
where the LP branch is useful: the numerical active set contains nearly active
but exactly inactive pieces.  The final table compares LP backend costs.
These diagnostics are separated from the main numerical section because they
validate the implementation costs and clarify the LP fallback, rather than
support a performance claim for an application class.

\begin{table}[!ht]
\centering
\caption{Safeguard frequency and LP use in the finite-max RA-DCA tests.  The
entries are means over ten random instances.  ``Default'' uses the parameters
of the reported experiments; ``LP-forced'' sets the safeguard tolerance to
\(+\infty\) to time the convex-combination branch.  ``Safeguard frac.'' is the
fraction of iterations using an active vertex, and ``LP calls'' is the mean
number of calls to \eqref{eq:lp} per run.}
\label{tab:lpsafeguard}
\scriptsize
\resizebox{\linewidth}{!}{\begin{tabular}{lrrlrrrrr}
\toprule
test & $n$ & pieces & setting & iter. & safeguard frac. & LP calls & LP ms & CPU s\\
\midrule
max-affine & 100 & 1000 & default & 2.0 & 1.00 & 0.0 & -- & 0.00904\\
max-affine & 100 & 1000 & LP-forced & 3.1 & 0.65 & 1.1 & 72.78 & 0.0826\\
max-affine & 500 & 5000 & default & 2.0 & 1.00 & 0.0 & -- & 0.0272\\
max-affine & 500 & 5000 & LP-forced & 3.0 & 0.67 & 1.0 & 433.74 & 0.462\\
max-quadratic & 100 & 1000 & default & 18.0 & 1.00 & 0.0 & -- & 0.00723\\
max-quadratic & 100 & 1000 & LP-forced & 23.1 & 0.88 & 5.2 & 70.29 & 0.397\\
max-quadratic & 500 & 5000 & default & 18.0 & 1.00 & 0.0 & -- & 0.0386\\
max-quadratic & 500 & 5000 & LP-forced & 19.0 & 0.95 & 1.0 & 429.77 & 0.473\\
\bottomrule
\end{tabular}

}
\end{table}

The default rows show that, on the degenerate signed-pair tests, the sampled
vertex residual remains large until the method reaches a single active piece,
so the LP is skipped.  The LP-forced rows time the convex-combination branch on
the same active sets and are not used for the main performance claims here.

The following one-dimensional diagnostic shows when the LP does change the
computed step.  Consider
\begin{align*}
        F(x)
        &=
        \frac12x^2-
        \max\{0,\ c_1x-\delta,\ c_2x-\delta,\ c_3x-\delta\},\\
        (c_1,c_2,c_3)&=(0.010,0.015,0.020),
\end{align*}
with \(\delta=3\cdot10^{-4}\).  At \(x^0=0\), the flat piece is the only
exactly active piece and \(x=0\) is an exact directional-stationary minimizer.
However, with \(\eps_0=4\cdot10^{-4}\), all four pieces enter the numerical
active set.  A vertex step can then move to an exactly stationary but larger
local point generated by an inactive piece.  With \(\tau_0=2.5\cdot10^{-2}\)
and decreasing \(\eps_k,\tau_k\), the sampled residual is below the safeguard
threshold, so RA-DCA solves the LP; the LP assigns weight to the flat piece and
keeps the iterate at the exact minimizer.

\begin{table}[!ht]
\centering
\caption{A near-active diagnostic for the LP fallback.  The exact residual is
computed using the exact active set, not the initial \(\eps_0\)-active set.
The LP branch prevents a spurious step caused by nearly active but exactly
inactive affine pieces.}
\label{tab:lpbranchrole}
\scriptsize
\begin{tabular}{lrrrrr}
\toprule
method & $x$ & objective & exact res. & iter. & LP calls\\
\midrule
centered eps-active & 0.01125 & 6.33e-05 & 0.0112 & 1 & 0\\
full-vertex eps-active & 0.02 & 0.0001 & 0 & 1 & 0\\
random eps-active & 0.0114 & 6.56e-05 & 0.0059 & 1 & 0\\
RA-DCA LP fallback & 0 & 0 & 0 & 1 & 1\\
RA-DCA vertex-forced & 0.02 & 0.0001 & 0 & 2 & 0\\
\bottomrule
\end{tabular}

\end{table}

The backend diagnostic solves random projected active-set LPs with dimensions
comparable to the sketches used in the experiments.  Gurobi solves the exact
\(\ell_\infty\) LP and is the default backend.  The projected fallback solves
the simplex-constrained least-squares surrogate used when neither Gurobi nor
\texttt{linprog} is available.  The column \(t/t_{\rm G}\) reports the maximum
projected residual relative to the Gurobi LP value.

\begin{table}[!ht]
\centering
\caption{Active LP backend diagnostic.  The full CSV file also reports
\texttt{linprog}; the table shows the exact Gurobi LP and the projected
simplex fallback.}
\label{tab:lpbackend}
\scriptsize
\begin{tabular}{rrlrrr}
\toprule
dirs & active & solver & mean ms & median ms & $t/t_{\rm G}$\\
\midrule
40 & 500 & gurobi & 12.81 & 12.54 & 1.00\\
40 & 500 & projected & 52.65 & 52.38 & 1.11\\
40 & 2000 & gurobi & 79.94 & 75.97 & 1.00\\
40 & 2000 & projected & 980.46 & 963.12 & 1.09\\
80 & 500 & gurobi & 20.03 & 19.98 & 1.00\\
80 & 500 & projected & 50.58 & 49.54 & 1.06\\
80 & 2000 & gurobi & 140.23 & 138.23 & 1.00\\
80 & 2000 & projected & 995.66 & 986.14 & 1.09\\
\bottomrule
\end{tabular}

\end{table}

The projected fallback gives residuals within about \(6\%\)--\(11\%\) of the
Gurobi LP value in these tests, although the simple MATLAB implementation is
slower for larger active sets.  The reported runs therefore use Gurobi when
available, while retaining the projected method as a portable fallback rather
than as the preferred high-performance backend.

\section*{Statements and Declarations}

\noindent\textit{Funding.}
The author was supported by the National Natural Science Foundation of China
(Grant No.~42450242) and the Beijing Overseas High-Level Talent Program.
Institutional support was provided by the Beijing Institute of Mathematical
Sciences and Applications (BIMSA).

\smallskip
\noindent\textit{Competing interests.}
The author has no relevant financial or non-financial interests to disclose.

\smallskip
\noindent\textit{Data and code availability.}
The accompanying reproducibility package contains the MATLAB source code,
input data, and scripts used to reproduce the reported experiments.  The
scripts regenerate the CSV summaries, LaTeX table fragments, and figure files
used by the manuscript.  The QUBO benchmark data are the OR-Library UBQP
instances \(bqp50\), \(bqp100\), and \(bqp250\), downloaded from the
\href{https://people.brunel.ac.uk/~mastjjb/jeb/orlib/files/}{OR-Library raw
data directory} as files \texttt{bqp50.txt}, \texttt{bqp100.txt}, and
\texttt{bqp250.txt}.  The best-known reference values are taken from the
\href{https://biqmac.aau.at/biqmaclib.html}{Biq Mac library}.  These values
are described in Section~\ref{sec:qubo}.  The real-data support-function,
top-\(k\) support, and trimmed sparse-regression experiments use the LIBSVM
data sets \texttt{a8a}, \texttt{phishing}, and \texttt{w8a}, downloaded from
the \href{https://www.csie.ntu.edu.tw/~cjlin/libsvmtools/datasets/binary/}
{LIBSVM binary data directory}.

\end{document}